\newtheorem{theorem}{Theorem}[section]
\newtheorem{lemma}[theorem]{Lemma}
\newtheorem{corollary}[theorem]{Corollary}
\newtheorem{proposition}[theorem]{Proposition}
\theoremstyle{definition}
\newtheorem{definition}[theorem]{Definition}
\newtheorem{example}[theorem]{Example}
\newtheorem{problem}[theorem]{Problem}
\definecolor{myblue}{rgb}{0.66,0.78,1.00}
\newcommand\hra{\hookrightarrow}
\newcommand{\cC}{\mathcal{C}}
\newcommand{\B}{\mathbb{B}}
\newcommand{\C}{\mathbb{C}}
\newcommand{\D}{\mathbb{D}}
\newcommand{\N}{\mathbb{N}}
\renewcommand{\P}{\mathbb{P}}
\newcommand{\R}{\mathbb{R}}
\newcommand\wt{\widetilde}
\newcommand{\Aut}{\mathop{{\rm Aut}}}
\def\dibar{\overline\partial}
\def\bs{\backslash}
\def\e{\epsilon}
\numberwithin{equation}{section}
\begin{document}

\title{}
\thanks{\textit{E-mail adresses:}  franc.forstneric@fmf.uni-lj.si, erlendfw@math.uio.no}
\thanks{F.\ Forstneri\v c supported by grants P1-0291 and J1-6173, Republic of Slovenia, and
by Schweizerische Nationalfonds grant 200021-116165/1.}
\thanks{E.\ F.\ Wold supported by Schweizerische Nationalfonds grant 200021-116165/1.}

%
%
\subjclass[2000]{Primary: 32C22, 32E10, 32H05, 32M17; Secondary: 14H55}
\date{August 31, 2008}
\keywords{Riemann surfaces, complex curves, holomorphic embeddings}

\LARGE
\centerline{\bf Bordered Riemann surfaces in $\mathbb C^2$}

\vskip 1cm

\large

\centerline{Franc Forstneri\v c\footnote{Corresponding author}$^{,a}$ and Erlend  Forn\ae ss Wold$^{b}$}

\medskip
\medskip

\tiny

\centerline{$^{a}$ \textit{Faculty of Mathematics and Physics, University of Ljubljana and
Institute of}}
\centerline{\textit{Mathematics, Physics and Mechanics, Jadranska 19, 1000 Ljubljana, Slovenia}}

\centerline{$^b$ \textit{Matematisk Institutt, Universitetet i Oslo,
Postboks 1053 Blindern, 0316 Oslo, Norway}}

\rm

\vskip 7mm

\tiny

\centerline{\bf ABSTRACT\rm}
We prove that the interior of any compact complex
curve with smooth boundary in $\C^2$ admits a 
proper holomorphic embedding into $\C^2$. 
In particular, if $D$ is a bordered Riemann surface whose closure admits
a holomorphic embedding into $\C^2$, then $D$ admits a
proper holomorphic embedding into $\C^2$.
\bigskip

\centerline{\bf R\'{E}SUM\'{E}\rm}
On montre que l'interieur d'une courbe complexe compacte avec
bord lisse dans $\mathbb C^2$ admet un plongement holomorphe propre dans
$\mathbb C^2$. En particulier, si D est une surface de Riemann avec bord
dont la fermeture admet un plongement holomorphe dans $\mathbb C^2$,
alors D admet un plongement holomorphe propre dans $\mathbb C^2$.

\normalsize
\medskip
\rightline{\it To Edgar Lee Stout on the occasion of his 70th birthday}

\maketitle

\section{Introduction}
\label{Intro}
It is an old problem whether every open Riemann surface is biholomorphically
equivalent to a topo\-lo\-gi\-cally closed smooth complex curve in
$\C^2$. Equivalently, {\em does every open Riemann surface embed properly
holomorphically in $\C^2$~?} (See Bell and Narasimhan
\cite[Conjecture 3.7, p.\ 20]{BN}.) Such $D$ always embeds in $\C^3$
and immerses in $\C^2$ \cite{Bishop, Narasimhan, Remmert}.

A {\em bordered Riemann surface} is a compact one dimensional
complex manifold, $\overline D$, not necessarily connected,
with smooth boundary $bD$ consisting of finitely many
closed Jordan curves. 
The embedding problem naturally decouples in the following two problems:
\begin{itemize}
\item[(a)] find a (non-proper) holomorphic embedding $f\colon \overline D\hra \C^2$;
\item[(b)] push the boundary of the compact complex curve 
$\overline \Sigma=f(\overline D)\subset \C^2$ to infinity 
without introducing any double points.
\end{itemize}

In this paper we give a complete solution to the second problem,
also for curves with interior singularities.
The following is our main result.

\begin{theorem}
\label{curves}
If $\overline \Sigma$ is a (possibly reducible) compact complex curve 
in $\C^2$ with boundary $b\Sigma$ of class $\cC^r$ 
for some $r>1$, then the inclusion 
map $\iota\colon \Sigma=\overline \Sigma\bs b\Sigma \hra \C^2$
can be approximated, uniformly on compacts in $\Sigma$, 
by proper holomorphic embeddings $\Sigma\hra\C^2$. 
In particular, a smoothly bounded relatively compact domain $\Sigma$ 
in an affine complex curve $A\subset \C^2$ admits
a proper holomorphic embedding in $\C^2$.
\end{theorem}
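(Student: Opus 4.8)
The plan is to produce $f$ as a limit of holomorphic automorphisms of $\C^2$ applied to the inclusion, pushing the boundary of $\overline\Sigma$ to infinity in infinitely many steps and using Anders\'en--Lempert theory to force the limit to exist and to be an embedding. Fix a normal exhaustion $K_1\Subset K_2\Subset\cdots$ of $\Sigma$ by compacts and an exhaustion $B_1\Subset B_2\Subset\cdots$ of $\C^2$ by balls. I will build $\beta_n\in\Aut(\C^2)$, set $g_n=\beta_n\circ\cdots\circ\beta_1\colon\overline\Sigma\hra\C^2$ with $g_0=\iota$, and choose compacts $L_n\subset\Sigma$ with $K_n\subset L_n\subset L_{n+1}$ and $\bigcup_n L_n=\Sigma$, so that for a prescribed summable $\e_n>0$: (i) $\sup_{L_{n-1}}\lVert g_n-g_{n-1}\rVert<\e_n$; (ii) $\lVert g_m\rVert>n-1$ on $\Sigma\setminus L_n$ for all $m\ge n$; (iii) $g_n$ maps $\overline\Sigma$ biholomorphically onto a compact complex curve with $\cC^r$ boundary. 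Granting this, $g_n\to f$ uniformly on compacts of $\Sigma$; by (i) and the Cauchy estimates (with $\e_n$ small relative to the first derivatives of $g_{n-1}$) the limit $f$ is an injective immersion, i.e. a holomorphic embedding of $\Sigma$; by (ii) $\lVert f\rVert\to\infty$ along $b\Sigma$, hence $f$ is proper; taking $\e_1$ small and $K_1$ large gives the approximation of $\iota$. Interior singularities cost nothing: every map used is biholomorphic near $\Sigma$, so the analytic structure is preserved and ``embedding'' means a proper injective holomorphic map, biholomorphic onto its (again possibly singular) image.

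\textbf{Inductive step I: exposing a boundary point on each component.} Assume $g_{n-1}$ is built; put $X=g_{n-1}(\overline\Sigma)$, $L=g_{n-1}(L_{n-1})$, and pick one point $p_j$, $j=1,\dots,m$, on each boundary curve of $X$. By the technique of \emph{exposing boundary points}, there is $\alpha\in\Aut(\C^2)$, as close to the identity as we wish on $L$ and on a prescribed large ball, such that on the curve $\alpha(X)$ each $\alpha(p_j)$ is exposed far out: it lies on a fixed complex line $\Lambda_j$ which $\alpha(X)$ meets only at $\alpha(p_j)$, and it is the only point of $\alpha(X)$ outside a preassigned large ball. Concretely one uses a thin embedded arc from $p_j$ out to $\Lambda_j$ and builds $\alpha$ stretching a small neighbourhood of $p_j$ along this arc. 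This is the step for which the hypothesis $r>1$ is indispensable: for the modified curve (and then its automorphic model $\alpha(X)$) to again have a $\cC^r$ boundary, and to be polynomially convex together with its exposed ends, one needs $b\Sigma\in\cC^r$ with $r$ strictly above $1$; mere Lipschitz regularity does not suffice.

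\textbf{Inductive step II: sending the exposed points to infinity, then correcting.} Since $\alpha(X)$ meets $\bigcup_j\Lambda_j$ only at the exposed points, we may compose with a rational map $\phi$ with polar set $\bigcup_j\Lambda_j$ that is holomorphic and injective on a neighbourhood of $\alpha(X)\setminus\bigcup_j\Lambda_j$; choosing $\phi$ to blow up fast enough along $\bigcup_j\Lambda_j$ it carries the small boundary collar of $\alpha(X)$ near the $\alpha(p_j)$ outside $\overline B_n$, while being as close to the identity as we wish on $L$ (whose distance to $\bigcup_j\Lambda_j$ is large). Then $h:=\phi\circ\alpha$ is injective holomorphic on a neighbourhood of the polynomially convex compact $K$ obtained from $\alpha(X)$ by deleting small relatively open disks around the $\alpha(p_j)$, and $h$ is isotopic to the inclusion $K\hra\C^2$ through such maps; by the Anders\'en--Lempert theorem (in the Forstneri\v c--Rosay form) there is $\beta_n\in\Aut(\C^2)$ with $\sup_K\lVert\beta_n-h\rVert<\e_n$. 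Set $g_n=\beta_n\circ g_{n-1}$ and choose $L_n\supset K_n$ so that $g_{n-1}^{-1}(K)\subset L_n$ while the deleted disks and the collar of $b\Sigma$ lie in $\Sigma\setminus L_n$ (enlarging the disk radii if needed): then (i) holds because $\beta_n\approx h\approx\mathrm{id}$ on $L$, (iii) is clear, and (ii) follows from $\beta_n\approx h$ on $K$ (which has pushed the collar past $\overline B_n$) together with a routine bookkeeping argument ensuring that the later automorphisms, being close to the identity on the bounded annular region into which the collar has been mapped, do not draw it back in.

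\textbf{Main obstacle.} The crux is the exposing step coupled with injectivity control in the pushing step: one must move a single boundary point on each component all the way out to a prescribed line by an automorphism of $\C^2$ that is nearly the identity on a large compact, \emph{without creating double points} and while keeping the boundary of class $\cC^r$ and the relevant compacts polynomially convex --- and it is exactly here that $r>1$ enters. The remaining ingredients --- the Anders\'en--Lempert correction, convergence of the $g_n$ to an embedding via Cauchy estimates, and the bookkeeping upgrading the pushed-out collars to genuine properness in the limit --- follow the by-now standard pattern of the theory of proper holomorphic embeddings.
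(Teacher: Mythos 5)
Your overall architecture (expose boundary points, blow them to infinity with a rational shear, correct by automorphisms, take a limit) is the right family of ideas, but two of your steps assert exactly the things that constitute the actual mathematical content, and as stated they do not hold. The first and most serious gap is Inductive Step I: you claim there is $\alpha\in\Aut(\C^2)$, close to the identity on a large compact, that stretches a small neighbourhood of each boundary point $p_j$ along an arc so that $\alpha(p_j)$ becomes exposed. No such automorphism is produced in the literature, and the paper's entire machinery (Sections 2--4) exists precisely because the exposing is \emph{not} done ambiently. Instead one reparametrizes the source: a conformal diffeomorphism $\phi\colon\overline D\to\overline D'$ of the normalizing bordered Riemann surface is built by a Cartan-type splitting of maps close to the identity (Lemma \ref{main-step}, using \cite[Theorem 4.1]{F:Acta}), adding a ``spike'' reaching out to a new boundary point, and the curve is then re-embedded via a Mergelyan extension along attached arcs. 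The resulting map $F=\wt f\circ\phi$ is only \emph{biholomorphic} to the original curve, not an automorphic image of it; establishing even that much in the presence of interior singularities requires the normalization and rigidity lemmas (Lemmas \ref{normalize}, \ref{weakly-holo}, \ref{invariance}), with $F$ forced to agree with $f$ to high order over every singular point. Your remark that ``interior singularities cost nothing'' is true only under your unproven all-automorphism premise.

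The second gap is in your properness bookkeeping in Step II. The rational shear $\phi$ has poles only on the lines $\Lambda_j$ through the exposed points, so it sends to infinity only the exposed points and an arbitrarily small boundary neighbourhood of them; the bulk of the boundary collar is far from $\bigcup_j\Lambda_j$ and is essentially unmoved. Your claim that $h=\phi\circ\alpha$ ``has pushed the collar past $\overline B_n$'' is therefore false (and internally inconsistent, since your compact $K$ excludes precisely the deleted disks where $\phi$ does blow up). What actually happens in the paper is that the shear is applied \emph{once}, producing an unbounded curve $X$ whose boundary consists of $m$ arcs tending to infinity; pushing \emph{all} of $bX$ out of every ball then requires Wold's separate construction (\cite{Wold1}, Lemma 1 and Theorem 4), which rests on the polynomial convexity of compacts in $X$ together with the geometry of the unbounded arcs (properties (1)--(3) in the proof of Theorem \ref{Wold}), and the limit is a Fatou--Bieberbach map $\Phi\colon\Omega\to\C^2$ with $X\subset\Omega$ and $bX\subset b\Omega$. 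Finally, the role you assign to $r>1$ (keeping the boundary $\cC^r$ under automorphisms) is not where that hypothesis enters; it is used for the Mergelyan-type $\cC^1$ approximation up to the boundary and for the boundary regularity of the conformal reparametrization.
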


The precise assumption on $\overline\Sigma$ 
is that locally near each boundary point $p\in b\Sigma$ it is a one 
dimensional complex manifold with boundary of class $\cC^r$, while
the interior $\Sigma$ is a pure one dimensional analytic subvariety
with at most finitely many  singularities.

Theorem \ref{curves} is proved in \S \ref{proof}.
It includes the following result which gives an affirmative 
answer to the problem posed in \cite[p.\ 686]{CF} 
and which contains all known results on embedding
bordered Riemann surfaces properly holomorphically
in $\C^2$. For Riemann surfaces with punctures see 
also Theorem \ref{puncture} and Corollary \ref{cor-punctures} below. 

%
%
%
%
\begin{corollary}
\label{main}
Assume that $\overline D$ is a bordered Riemann surface with
$\cC^r$ boundary for some $r>1$ and that $f \colon \overline D\hra\C^2$ is 
a $\cC^1$ embedding which is holomorphic in $D$.
Then $f$ can be approximated, uniformly on compacts in $D$,
by proper holomorphic embeddings $D\hra\C^2$. 
\end{corollary}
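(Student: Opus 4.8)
The plan is to reduce Corollary \ref{main} directly to Theorem \ref{curves}. The hypothesis gives a $\cC^1$ embedding $f\colon\overline D\hra\C^2$ that is holomorphic on the interior $D$. The image $\overline\Sigma:=f(\overline D)$ is then a compact subset of $\C^2$, and I claim it is a compact complex curve in $\C^2$ in the sense required by Theorem \ref{curves}: away from $bD$, the set $\Sigma:=f(D)$ is the image of an open Riemann surface under a holomorphic injective immersion (the $\cC^1$ embedding hypothesis, together with holomorphicity in $D$, forces $df$ to have rank one at interior points — any interior point where $df$ drops rank would contradict injectivity of a holomorphic map, or is excluded outright since $f$ is an embedding), hence $\Sigma$ is a one-dimensional complex submanifold of $\C^2$, in particular a pure one-dimensional analytic subvariety (here with no singularities at all). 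Near a boundary point $p\in bD$, the map $f$ is a $\cC^1$ embedding of a neighborhood of $p$ in $\overline D$, which is a one-dimensional complex manifold-with-boundary of class $\cC^r$ (as $bD$ is $\cC^r$, $r>1$); thus $\overline\Sigma$ is locally near $f(p)$ a one-dimensional complex manifold with boundary of class $\cC^r$. So $\overline\Sigma$ satisfies exactly the hypotheses of Theorem \ref{curves}, with $b\Sigma=f(bD)$ of class $\cC^r$.

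Next I would apply Theorem \ref{curves} to $\overline\Sigma$: the inclusion $\iota\colon\Sigma\hra\C^2$ can be approximated, uniformly on compacts in $\Sigma$, by proper holomorphic embeddings $g\colon\Sigma\hra\C^2$. Now compose: $f|_D\colon D\to\Sigma$ is a biholomorphism of $D$ onto $\Sigma$ (an injective holomorphic immersion which is onto its image, and whose inverse is holomorphic since $f$ is an embedding), so $g\circ(f|_D)\colon D\hra\C^2$ is a proper holomorphic embedding of $D$ into $\C^2$. Properness is preserved because $f|_D$ is a homeomorphism onto $\Sigma$ and $g$ is proper; holomorphicity and injectivity are immediate from those of $g$ and $f|_D$.

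Finally I must check the approximation statement is in the right coordinates. Theorem \ref{curves} gives approximation of $\iota$ by $g$ uniformly on compacts in $\Sigma$; pulling back by the biholomorphism $\varphi:=f|_D\colon D\to\Sigma$, and noting that $\varphi$ maps compact subsets of $D$ to compact subsets of $\Sigma$, we get that $g\circ\varphi$ approximates $\iota\circ\varphi=f|_D$ uniformly on compacts in $D$. Hence $f|_D$ is approximable, uniformly on compacts in $D$, by the proper holomorphic embeddings $g\circ\varphi\colon D\hra\C^2$, which is exactly the assertion of Corollary \ref{main}. The only genuinely nontrivial input is Theorem \ref{curves}; the main point requiring a moment's care is the verification that $\overline\Sigma=f(\overline D)$ really is a compact complex curve with $\cC^r$ boundary in the precise sense the theorem demands, i.e.\ that the $\cC^1$-embedding-holomorphic-in-the-interior hypothesis upgrades to the manifold-with-$\cC^r$-boundary structure near $bD$ and to a smooth analytic subvariety structure in the interior — everything else is a formal composition argument.
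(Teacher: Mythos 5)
Your reduction to Theorem \ref{curves} is the right idea, and the composition/properness/approximation bookkeeping at the end is fine, but there is a genuine gap at the step where you assert that $\overline\Sigma=f(\overline D)$ ``satisfies exactly the hypotheses of Theorem \ref{curves}, with $b\Sigma=f(bD)$ of class $\cC^r$.'' The map $f$ is only assumed to be a $\cC^1$ embedding, so $f(bD)$ is a priori only a $\cC^1$ curve, and near a boundary point the image is only a complex manifold with boundary of class $\cC^1$ --- not of class $\cC^r$ for some $r>1$, which is what Theorem \ref{curves} requires. This is not a vacuous worry: take $D=\D$ and $f(z)=(z,g(z))$ with $g$ holomorphic in $\D$ and $\cC^1$ but not $\cC^{1,\alpha}$ up to $b\D$; then $f(b\D)$ is not of class $\cC^r$ for any $r>1$, so the theorem as stated does not apply to $f(\overline\D)$. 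Holomorphy in the interior does not upgrade boundary regularity of the image.

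The paper closes exactly this gap before invoking Theorem \ref{curves}: it first realizes $\overline D$ as a smoothly bounded domain in an open Riemann surface $R$, and then uses Mergelyan's theorem to approximate $f$ in the $\cC^1(\overline D)$ topology by a map $\wt f$ holomorphic on an open neighborhood $U\supset\overline D$ in $R$. If the approximation is close enough and $U$ small enough, $\wt f$ embeds $U$ onto a nonsingular locally closed complex curve $A\subset\C^2$, so $\wt f(\overline D)$ is a smoothly bounded compact subdomain of $A$ and Theorem \ref{curves} applies to it; the resulting proper embeddings approximate $\wt f$, hence $f$, on compacts in $D$. To repair your argument you should insert this regularization step (or else prove a version of Theorem \ref{curves} under $\cC^1$ boundary hypotheses, which is not what is stated). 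A minor additional remark: your parenthetical that a rank drop of $df$ at an interior point ``would contradict injectivity of a holomorphic map'' is false as stated (consider $z\mapsto(z^2,z^3)$); only the embedding hypothesis saves you there, as you note.
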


\begin{proof}
A bordered Riemann surface $\overline D$ 
with $\cC^r$ boundary is biholomorphic to a relatively compact 
smoothly bounded domain $D'$ in an open Riemann surface $R$.
Furthermore, if $r$ is a noninteger then any biholomorphic map 
$D\to D'$ extends to a $\cC^r$ diffeomorphism $\overline D\to \overline D'$.
(See the discussion and the references in Section \ref{Teichmuller}.) 
Hence we may assume that $D$ is a relatively compact domain with smooth 
boundary in a Riemann surface $R$. 

By Mergelyan's theorem we can approximate $f$ 
in the $\cC^1$ topology on $\overline D$
by a holomorphic map $\wt f\colon U\to\C^2$ from an open 
set $U\subset R$ containing $\overline D$. If the approximation
is sufficiently close and $U$ is chosen sufficiently small,
then $\wt f$ is a holomorphic embedding of $U$ onto 
a locally closed embedded complex curve without singularities
$A=\wt f(U)$ in $\C^2$. It remains to apply Theorem \ref{curves}
to the complex curve with smooth boundary 
$\overline \Sigma=\wt f(\overline D)$. 
\end{proof}

Corollary \ref{main}, together with the main result
of \cite{KLW}, implies the following result on embeddings
with interpolation on a discerete set.

\begin{corollary}
\label{cor1}
Let $D$ be a bordered Riemann surface satisfying the
hypothesis of Corollary \ref{main}.
Given discrete sequences of points $\{a_j\}\subset D$
and $\{b_j\}\subset \C^2$ without repetitions, there is a proper
holomorphic embedding $\varphi\colon D\hookrightarrow\C^2$
such that $\varphi(a_j)=b_j$ for $j=1,2,\ldots$.
\end{corollary}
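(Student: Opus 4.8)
The plan is to deduce Corollary~\ref{cor1} from Corollary~\ref{main} by combining it with the interpolation result of \cite{KLW}, which asserts that any open (finite genus) Riemann surface admitting a proper holomorphic embedding into $\C^2$ admits one that moreover interpolates prescribed values on a discrete set. Concretely, by Corollary~\ref{main} we already know that $D$ admits a proper holomorphic embedding $\varphi_0\colon D\hra\C^2$; the content still to be supplied is the prescription $\varphi(a_j)=b_j$.

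First I would fix a proper holomorphic embedding $\varphi_0\colon D\hra\C^2$ furnished by Corollary~\ref{main}. The sequence $\{\varphi_0(a_j)\}\subset\C^2$ is then a discrete sequence without repetitions, but in general it differs from the target sequence $\{b_j\}$. The key step is to correct this by an automorphism-type argument: using the Andersén--Lempert theory of holomorphic automorphisms of $\C^2$ (which underlies \cite{KLW}), one constructs a holomorphic automorphism, or more precisely a suitable holomorphic self-map obtained as a limit of compositions of shears, that carries the discrete set $\{\varphi_0(a_j)\}$ onto $\{b_j\}$ while keeping the image of $\varphi_0$ properly embedded. Since $\varphi_0(D)$ is a closed complex curve in $\C^2$, one has enough room to move finitely many points at a time along paths avoiding the rest of the curve, and the Andersén--Lempert theorem lets one realize such isotopies by automorphisms approximately; a careful inductive scheme, with errors controlled on an exhaustion of $\C^2$, produces a limit automorphism $\Phi$ with $\Phi(\varphi_0(a_j))=b_j$ for all $j$. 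Then $\varphi=\Phi\circ\varphi_0$ is the desired proper holomorphic embedding with $\varphi(a_j)=b_j$.

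Alternatively, and more directly, one invokes \cite{KLW} as a black box: that paper proves precisely that a bordered Riemann surface (or open Riemann surface of finite genus) which embeds properly holomorphically into $\C^2$ also admits a proper holomorphic embedding interpolating any prescribed discrete data. Corollary~\ref{main} supplies the hypothesis of \cite{KLW} for our $D$, namely the existence of \emph{some} proper holomorphic embedding $D\hra\C^2$, and the conclusion is exactly the statement of Corollary~\ref{cor1}. In this reading there is essentially nothing left to prove beyond citing the two results in the right order.

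The main obstacle, and the place where the real work lives (inside \cite{KLW} rather than here), is controlling the proper embedding and the interpolation \emph{simultaneously}: pushing the boundary to infinity tends to destroy prescribed interior values, while forcing interior values tends to spoil properness or introduce double points. The resolution is the standard but delicate compactness/approximation bookkeeping — alternating steps that enlarge the part of $D$ on which the map is controlled, extend the part of $\C^2$ over which the image behaves properly, and realize the required point motions by automorphisms close to the identity on large compacts, all while summing the errors to a convergent limit. For the purposes of this paper, however, this difficulty has already been handled in \cite{KLW}, so the proof of Corollary~\ref{cor1} reduces to the observation above.
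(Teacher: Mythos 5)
Your proposal has a genuine gap, in both of its variants. The ``black box'' reading of \cite{KLW} mis-states the hypothesis of the result being cited: \cite[Theorem 3]{KLW} does \emph{not} take as input the mere existence of some proper holomorphic embedding $D\hra\C^2$; it requires a holomorphic embedding $f\colon\overline D\hra\C^2$ of the \emph{closure} such that every boundary component of $D$ contains an $f$-exposed point (Definition \ref{exposed}). Corollary \ref{main} produces a proper embedding of the open surface $D$ and says nothing about exposed boundary points, so it does not supply the hypothesis of \cite[Theorem 3]{KLW}. The paper's actual proof feeds Theorem \ref{exposing} --- the main new technical result, which modifies a given embedding of $\overline D$ so that each boundary curve acquires an exposed point without changing the conformal structure --- into \cite[Theorem 3]{KLW}; Corollary \ref{main} is not used at all in the argument.

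Your first variant, post-composing a proper embedding $\varphi_0$ with an automorphism $\Phi$ of $\C^2$ satisfying $\Phi(\varphi_0(a_j))=b_j$, fails for a more fundamental reason: two discrete sequences in $\C^2$ need not be equivalent under $\Aut\C^2$. By Rosay--Rudin there exist non-tame discrete sets, so if $\{b_j\}$ is non-tame while $\{\varphi_0(a_j)\}$ is tame (or vice versa), no such $\Phi$ exists, and no amount of Anders\'en--Lempert bookkeeping can produce one. This is precisely why the interpolation cannot be performed \emph{after} the proper embedding has been constructed; it must be woven into the inductive push-to-infinity scheme itself, which is what \cite{KLW} does starting from the exposed-point embedding. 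The correct reduction is therefore: Theorem \ref{exposing} gives $f\colon\overline D\hra\C^2$ with an exposed point in each boundary component, and then \cite[Theorem 3]{KLW} yields the proper embedding with $\varphi(a_j)=b_j$.
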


In the remainder of this introduction we summarize the main
earlier results on embedding Riemann surfaces in $\C^2$,
and we give a few examples.

An open Riemann surface is the same thing as a one dimensional
Stein manifold. By the classical results (see \cite{Bishop, Narasimhan, Remmert}) 
every open Riemann surface embeds  properly holomorphically in $\C^3$,
and it immerses properly holomorphically in $\C^2$. 
According to Eliashberg and Gromov \cite{EG} and 
Sch\"urmann \cite{Sch}, a Stein manifold of dimension $n>1$ admits 
a proper holomorphic embedding in $\C^N$ with $N=\left[\frac{3n}{2}\right] + 1$.
For $n=1$ this would predict that each open Riemann surface 
embeds properly into $\C^2$, but the proof in the mentioned
papers breaks down in this lowest dimensional case.
The main problem is that self-intersections (double points)
of an immersed complex curve in $\C^2$ are stable under deformations.

The oldest results for embedding Riemann surfaces 
in $\C^2$ are due to Kasahara and Nishino \cite{Stehle}
(for the disc $\D=\{z\in \C\colon |z|<1\}$), Laufer \cite{Laufer}
(for annuli $A=\{r_1 <|z|<r_2\}$),
and Alexander \cite{Alexander} (for $\D$ and $\D \bs \{0\}$);
these were essentially the only known results at the
time of the survey by Bell and Narasimhan \cite{BN}.
In 1995, J.\ Globevnik and B.\ Stens\o nes 
proved that every finitely connected planar domain $D\subset \C$
without isolated boundary points embeds properly holomorphically
into $\C^2$ (see \cite{GS} and also \cite{CF,CG}).

Considerably more general results were obtained  
by the second author in recent papers \cite{Wold1,Wold2,Wold3}. 
In \cite{Wold2}, Corollary \ref{main} was 
proved under the additional assumption that each boundary curve 
$C_j$ of the image $\overline \Sigma=f(\overline D)$ 
contains an {\em exposed point}
$p_j =(p^1_j,p^2_j)$, meaning that the vertical line 
$\{p^1_j\}\times \C$ intersects the curve 
$\overline \Sigma$ only at $p_j$ and the intersection is transverse. 
(See Definition \ref{exposed} and Theorem \ref{Wold} below.)
By applying a shear $g(z,w) = \bigl(z,w+h(z)\bigr)$, where $h$
is a suitably chosen rational function with simple poles at the
points $p^1_j$, the exposed points $p_j$ are blown off to infinity
and we obtain an unbounded embedded complex curve 
$\overline X = g(\overline \Sigma \bs\{p_1,\ldots,p_m\}) \subset\C^2$ 
whose boundary $bX$ consists of the arcs $\lambda_j=g(C_j\bs \{p_j\})$
stretching to infinity.
By a sequence of holomorphic automorphisms $\Phi_n$ of $\C^2$ we then
push $bX$ to infinity, insuring that the sequence
converges to a Fatou-Bieberbach map 
$\Phi=\lim_{n\to\infty} \Phi_n\colon \Omega\to \C^2$
such that $X\subset \Omega$ and $bX\subset b\Omega$.
The restriction $\Phi|_X\colon X\hra \C^2$ is then a proper 
holomorphic embedding of $X$ (that is biholomorphic to $D$)
into $\C^2$. The relevant results on automorphisms of $\C^2$ 
come from the papers \cite{AL,BF,FR}. A list of Riemann surfaces 
that can be embedded in $\C^2$ by Wold's method 
can be found in \cite[Theorem 1]{KLW}.

We prove Theorem \ref{curves} in \S\ref{proof} by first
modifying $\overline \Sigma$ to a biholomorphically
equivalent complex curve which contains an exposed point 
in each boundary component (see Theorem \ref{exposing}); 
this is the main new technical result of this paper. 
The proof is then completed by Wold's method as in \cite{Wold2}
(see Theorem \ref{Wold}). 

A main difference between our construction in this paper and 
those of Globevnik and Stens\o nes \cite{GS} (for
planar domains) and Wold \cite{Wold3} (for domains in
tori) is that {\em the conformal structure on $D$ does not change
during the construction}, and hence we do not need the
uniformization theory in order to complete the proof. 

In \S\ref{Teichmuller} 
we  sketch another  possible proof of Corollary \ref{main} 
by using Teich\-m\"uller spaces of bordered Riemann surfaces.

%
%
%
%
\begin{example}
\label{example1}
Let $R$ be a smooth closed algebraic curve in the projective plane
$\P^2$. If $U_1,...,U_k$ are pairwise disjoint smoothly bounded 
discs in $R$ whose union contains the intersection of $R$ with 
a projective line $\P^1\subset \P^2$, then the bordered Riemann surface 
$D= R\bs {\bigcup_{i=1}^k} \overline U_i\subset \P^2\bs \P^1=\C^2$ embeds 
properly holomorphically into $\C^2$ according to Corollary \ref{main}.
In particular, since every one dimensional complex torus embeds as a 
smooth cubic curve in $\P^2$, with a given point going to the line at infinity,
we see that any finitely connected subset without isolated boundary points
in a torus embeds properly into $\C^2$. (This is the main
theorem in \cite{Wold3}.)
\qed \end{example}

%
%
%
%
\begin{example}
\label{example2}
A compact Riemann surface $R$ is called {\em hyperelliptic}
if it admits a meromorphic function of degree two,
i.e., a two-sheeted branched holomorphic covering $R\to\P^1$
(see \cite[p.\ 247]{Griffiths-Harris}).
Such $R$ is the normalization of a complex curve
in $\P^2$ given by $w^2=\Pi_{j=1}^k (z-z_j)$ for some choice
of points $z_1,\ldots,z_k\in \C$ (see \cite{FK}). A bordered
Riemann surface $D$ is hyperelliptic if its double 
is hyperelliptic. (The double of $D$ is 
obtained by gluing two copies of $\overline D$, 
the second one with the conjugate conformal structure, along their 
boundaries; see \cite[p.\ 217]{Spr}.) 
Such $\overline D$ admits a holomorphic embedding into 
the closed bidisc $\overline \D^2 \subset\C^2$ by a pair of inner functions
mapping $bD$ to the torus $(b\,\D)^2$
(see Rudin \cite{Rudin} and Gouma \cite{Gouma}).
Hence Corollary \ref{main} implies that 
{\em every hyperelliptic bordered Riemann surface $D$, 
and also every smoothly bounded domain in such $D$,
admits a proper holomorphic embedding in $\C^2$}.
The first statement is  known \cite[Corollary 1.3]{CF},
but the second one is new.
\qed \end{example}

For the general theory of Riemann surfaces
see \cite{Ahlfors-Sario,FK,Griffiths-Harris,Spr}, 
and for the theory of Stein manifolds see \cite{GR}.

%
%
%
%
\section{Construction of a conformal diffeomorphism}
\label{conformal-diff}
The main result of this section is Theorem \ref{conformal-map} 
which is one of our main tools in the proof of Theorem \ref{curves}.

We begin with a lemma on conformal mappings.
Denote by $\D$ the open unit disc in $\C$, 
and by $r\D$ the disc of radius $r>0$. 

\begin{lemma}
\label{lemma1} 
Assume that $R$ is a connected open Riemann
surface, $G\Subset R$ is an open simply connected domain with
smooth boundary, $V'\Subset  V'' \subset R$ are small
neighborhoods of a boundary point $a\in bG$, $b$ is a point in $R
\bs\overline G$, $\gamma$ is a smooth Jordan arc with endpoints
$a$ and $b$ such that $\gamma\cap\overline G=\{a\}$ and the
tangent lines to $\gamma$ and $b\,G$ at the point $a$ are
transverse, and $V$ is a neighborhood of $\gamma$. Then there
exists a sequence of smooth diffeomorphisms
$\psi_{n}\colon\overline G \to \psi_n(\overline G) \subset R$ that
are conformal on $G$ and satisfy the following properties for
$n=1,2,\ldots$:
\begin{itemize}
\item[(i)]   $\psi_{n}\rightarrow\mathrm{id}$ locally uniformly on
$G$ as $n\rightarrow\infty$,
\item[(ii)]  $\psi_{n}(a)=b$, and
\item[(iii)] $\psi_{n}(\overline V'\cap\overline G)\subset V''\cup V$.
\end{itemize}
\end{lemma}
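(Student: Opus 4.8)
The plan is to build $\psi_n$ as a composition of a fixed conformal ``opening up'' map that spreads a boundary neighborhood of $G$ near $a$ out along a collar, followed by a rescaling that contracts the image back toward $G$ while carrying $a$ to $b$. First I would pass to a local model: since $G$ is simply connected with smooth boundary, the Riemann mapping theorem gives a conformal map $\Phi\colon G\to\D$ extending smoothly (even $\cC^\infty$, or at least $\cC^1$, by standard boundary regularity) to $\overline G\to\overline\D$, with $\Phi(a)=1$. Working in the disc, the problem becomes: construct conformal self-maps-to-image of $\D$ fixing nothing in the interior limit sense but sending $1$ to a prescribed nearby point, with controlled images of a boundary arc. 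Concretely, I would fatten $\overline G$ slightly inside $R$ to an open Riemann surface neighborhood on which $\Phi$ is still defined and conformal, so that we have genuine room to move the image of $\overline G$ around.

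Next I would exploit the transversality hypothesis. Choose a smooth family $\gamma_t$, $t\in[0,1]$, of Jordan arcs from $a$ with $\gamma_0$ tangent-adjusted so that $\gamma_0\setminus\{a\}$ enters $R\setminus\overline G$, $\gamma_1=\gamma$, and each $\gamma_t\cap\overline G=\{a\}$ transversally; this amounts to an isotopy of the arc inside $V\cup V''$ rel the endpoint $a$, possible because $V$ is a neighborhood of $\gamma$ and $V''$ a neighborhood of $a$, and because any two such transverse arcs are ambient-isotopic through transverse arcs in a surface. Integrating a vector field supported in $V\cup V''$ realizing this isotopy gives a diffeotopy $h_t$ of $R$, $h_0=\mathrm{id}$, with $h_t$ equal to the identity outside $V\cup V''$ and $h_1$ carrying a short initial sub-arc of $\gamma_0$ onto a short initial sub-arc of $\gamma$. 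The composite $h_1\circ(\text{model near }a)$ then moves a boundary collar of $G$ near $a$ along $\gamma$ toward $b$; but $h_1$ is only a diffeomorphism, not conformal, so this by itself is not yet a $\psi_n$.

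The device to repair conformality is the standard one: instead of sliding $\overline G$ rigidly, I would first apply a fixed conformal map $\sigma\colon R\setminus\{a'\}\supset\overline G\to R$ (a ``Koebe-type'' or Möbius-type transformation in the disc model, e.g. $z\mapsto \frac{1}{2}(1+z)$ precomposed with $\Phi$ and postcomposed with $\Phi^{-1}$ on a slightly larger domain) that maps $\overline G$ into itself pushing $a$ into the interior-boundary region near where $\gamma$ leaves $G$; then run $n$ iterations or a parametrized family shrinking toward $a$. More precisely, in the disc model let $\mu_s(z)=\frac{z+s}{1+sz}$ for $s\uparrow 1$: these are conformal automorphisms of $\D$, $\mu_s\to 1$ locally uniformly on $\D$ as $s\to1$, and $\mu_s(-1)=\frac{-1+s}{1-s}\cdot(-1)\to 1$... — rather, one arranges a one-parameter conformal family $f_s$ of $\overline G$ into $R$ with $f_s\to\mathrm{id}$ locally uniformly and $f_s(a)\to a$ along a direction transverse to $bG$, i.e. into $R\setminus\overline G$ along the initial direction of $\gamma$. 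Composing $f_s$ with the (non-conformal but identity-away-from-$V\cup V''$) diffeomorphism $h_1$ adjusted so that $h_1$ is conformal on the tiny neighborhood of $a$ where it matters — this last point is where care is needed: one takes $h_1$ to agree with a conformal (in fact affine, in a local holomorphic chart) map on a shrinking neighborhood $W_s$ of $a$ containing $f_s(\overline V'\cap\overline G)$, which is possible precisely because $f_s(\overline V'\cap\overline G)$ shrinks to the single point $a$ as $s\to1$, so only the $1$-jet of $h_1$ at $a$ enters. Setting $\psi_n=h_1\circ f_{s_n}$ with $s_n\uparrow1$, property (i) follows from $f_{s_n}\to\mathrm{id}$ and $h_1=\mathrm{id}$ off $V\cup V''$; property (ii) from arranging $h_1(f_{s_n}(a))=h_1(\text{point on }\gamma\text{ near }a)$ and then post-composing with a further conformal contraction along $\gamma$ (or absorbing this into $f_{s_n}$ via a Möbius factor) to land exactly at $b$; property (iii) holds because $f_{s_n}(\overline V'\cap\overline G)\subset W_{s_n}\subset V''$ and its $h_1$-image is pushed into $V$ along $\gamma$.

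The main obstacle is reconciling the two incompatible demands: a conformal map of $\overline G$ into $R$ that moves $a$ to $b$ simply does not exist in general (it would be a conformal isomorphism onto a domain not contained in $R\setminus\overline G$ near $b$, which the topology/geometry of $\gamma\cap\overline G=\{a\}$ need not permit), so conformality must be sacrificed near $a$ — but only there, and only in a way that uses nothing more than the $1$-jet, so that it does not spoil (i). Making the bookkeeping precise — choosing $h_1$ to be holomorphic to sufficiently high order at $a$, controlling the nested neighborhoods $W_s\subset V''$, and verifying the image of $\overline V'\cap\overline G$ genuinely lands in $V''\cup V$ — is the technical heart; everything else is the Riemann mapping theorem plus a transversality isotopy argument.
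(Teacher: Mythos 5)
Your proposal has a fatal structural flaw: the map $\psi_n=h_1\circ f_{s_n}$ cannot be conformal on $G$. Since $f_{s_n}$ is conformal, $h_1\circ f_{s_n}$ is conformal at $x\in G$ exactly when $h_1$ is conformal at $f_{s_n}(x)$, so you need $h_1$ to be conformal on \emph{all} of the connected open set $f_{s_n}(G)$ — not merely on a shrinking neighborhood $W_s$ of $a$ containing $f_{s_n}(\overline V'\cap\overline G)$, which is the only set you control. Because $V''$ is a neighborhood of the boundary point $a$, the fixed open set $G\cap(V\cup V'')$ is nonempty and is essentially contained in $f_{s_n}(G)$ for $s_n$ close to $1$; there your $h_1$ is a genuinely non-conformal diffeotopy, so $\psi_n$ fails to be conformal on an open subset of $G$. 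Worse, the strategy cannot be repaired by making $h_1$ conformal on more of its domain: if $h_1$ were holomorphic on the connected set $f_{s_n}(G)$ and equal to the identity on the nonempty open piece $f_{s_n}(G)\setminus(V\cup V'')$, the identity principle would force $h_1=\mathrm{id}$ on all of $f_{s_n}(G)$, and then $\psi_n$ could not move $a$ to $b$. So ``sacrificing conformality only near $a$'' is not an option when the final map must be conformal on the whole of $G$, whose closure contains $a$; the non-conformal locus is an open set meeting $G$, not a point.

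The paper resolves exactly this tension by making \emph{every} factor holomorphic on a full neighborhood of $\overline G$. It attaches to $G$ a thin strip $R_{\e_n}$ around a short segment $l_n=[0,1/n]$ emanating from $a$ to form a domain $\Omega_n$, takes the Riemann map $g_n\colon G\to\Omega_n$ (which tends to the identity on $\overline G$ by Carath\'eodory kernel convergence / Rad\'o's theorem as $\Omega_n\to G$), and then applies a \emph{holomorphic injection} $f_n$ defined on a neighborhood of the compact set $\overline G\cup l_n$, obtained by Mergelyan--Stolzenberg-type approximation, which is close to the identity on $\overline G$ and maps $l_n$ onto an arc approximating $\gamma$ with $f_n(1/n)=b$. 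The composition $\psi_n=f_n\circ g_n$ is then globally conformal on $G$, and property (ii) is arranged by precomposing with automorphisms of $G$ correcting which boundary point $g_n$ sends to $1/n$. Your diffeotopy $h_1$ plays the role that the holomorphic injection $f_n$ must play, and the approximation theorem for holomorphic injections on ``domain plus arc'' sets is precisely the missing ingredient that your isotopy argument cannot substitute for.
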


\begin{proof}
Since $\overline G\cup\gamma$ admits a simply connected neighborhood in $R$,
and since we are going to construct maps with images near $\overline G\cup\gamma$,
we might as well assume that we are working in the complex plane,
that $a$ is the origin, and that the strictly positive real axis
lies in the complement of $\overline G$ near the origin.

For each $n\in\N$ let $l_n$ denote the line segment between $0$ and $\frac{1}{n}$
in $\R\subset\C$. Let $\wt V$ be a neighborhood of the origin with $V'\Subset\wt V\Subset V''$.
By approximation there are neighborhoods $U_{n}$ of $\overline G\cup l_{n}$ and holomorphic
injections $f_{n}\colon U_{n}\rightarrow\C$ such that the following hold
for all $n\in\N$:
\begin{itemize}
\item[(1)] $f_{n}\rightarrow\mathrm{id}$ uniformly on $\overline G$ as $n\to\infty$,
\item[(2)] $f_{n}(l_{n})$ approximates $\gamma$, with $f_{n}(\frac{1}{n})=b$ and
$f_{n}(l_{n})\subset V$, and
\item[(3)] $f_{n}(\overline G\cap\wt V)\subset V''$.
\end{itemize}
Of course property (3) is a consequence of (1) for large enough $n$.
For the details of this approximation argument
see e.g.\ \cite{Stolz} or \cite[Theorem 3.2]{HW}
(for $\cC^0$ approximation), and \cite[Theorem 3.2]{FF:submersions}
for the general case with smooth approximation on $l_n$.

For small positive numbers $\epsilon$ we let $\Omega_{\epsilon}$ denote
domains obtained by adding an $\epsilon$-strip around $l_{n}$ to $G$,
containing the point $\frac{1}{n}$ in the boundary $b\,\Omega_\e$.
We smoothen corners to obtain smoothly bounded domains.
We let $R_{\epsilon}$ denote
the part of $\Omega_{\epsilon}$ that is not in $G$.

Choose a sequence $\epsilon_{n}\searrow 0$ such that
$\overline \Omega_{\e_n} \subset U_{n}$ for each $n\in\N$.
Write $\Omega_n=\Omega_{\e_n}$ and $R_n=R_{\e_n}$.
By choosing the $\epsilon_{n}$'s small enough we get that
\begin{itemize}
\item[(4)]  $f_{n}(R_n)\subset V$ for each $n\in\N$.
\end{itemize}
Next we choose a point $p\in G$ and a sequence of conformal maps
$g_{n} \colon G\rightarrow\Omega_{n}$ such that
$g_{n}(p)=p$ and $g_{n}'(p)>0$ for $n=1,2,\ldots$.  
Since our domains are smoothly
bounded, the map $g_n$ extends to a smooth diffeomorphism
of $\overline G$ onto $\overline \Omega_n$.
Furthermore, since the domains $\overline \Omega_{n}$ 
converge to $\overline G$ as $n\to\infty$, we conclude by 
Rado's theorem (see e.g.\ \cite[Corollary 2.4, p.\ 22]{Pom} 
or \cite[Theorem 2, page 59]{Goluzin}) that
\begin{itemize}
\item[(5)] $g_{n}\rightarrow\mathrm{id}$ uniformly on $\overline G$ as $n\to\infty$.
\end{itemize}
Hence for $n$ large enough we have that
$g_{n}(\overline{V'}\cap \overline{G})\subset(\wt V\cap\overline G)\cup R_n$.
Combining this with (3) and (4) we see that
$f_{n}\circ g_{n}(\overline{V'}\cap\overline G)\subset V''\cup V$
if $n$ is large enough.  Hence, by defining $\psi_{n}:=f_{n}\circ g_{n}$
we get property (iii) for all large $n$, 
and we clearly also get property (i).

To see that property (ii) holds, let $a_{n}\in b\,G$ denote the point that
$g_{n}$ sends to $\frac{1}{n} \in b\Omega_n$. By (5) the sequence $a_{n}$ has to
converge to the origin, and so there is a sequence of conformal
automorphisms $\varphi_{n}$ of $G$ fixing the point $p$, sending the origin to $a_{n}$,
with $\varphi_{n}\rightarrow\mathrm{id}$ uniformly on $\overline G$.  
Replacing the maps $g_{n}$ by $g_{n}\circ\varphi_{n}$ in 
the above argument also gives (ii).
\end{proof}

In the remainder of this section, $R$ denotes a Riemann surface
without boundary and $D$ is a relatively compact,
smoothly bounded domain with nonempty boundary
in $R$, not necessarily connected.
The following Lemma provides the main inductive step
in the proof of Theorem \ref{exposing}.

\begin{lemma}
\label{main-step} 
Given pairwise distinct points $a_1, a_2,\ldots,a_k\in \overline D$ 
with $a_1 \in bD$, a neighborhood
$U\subset R$ of $a_1$, a point $b\in R\bs \overline D$ in the same
connected component of $R\bs D$ as $a_1$, and a positive
integer $N\in\N$, there is a smooth diffeomorphism $\phi\colon
\overline D\to \overline D'\subset R$ satisfying the following:
\begin{enumerate}
\item $\phi\colon D\to D'$ is biholomorphic,
\item $\phi(a_1)= b$,
\item $\phi$ is tangent to the identity map to order $N$ at each
of the points $a_2,\ldots,a_k$, and
\item $\phi$ is as close as desired to the identity map on $\overline D\bs U$
in the smooth topology on the space of maps.
\end{enumerate}
\end{lemma}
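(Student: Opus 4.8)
The plan is to build $\phi$ as a composition of finitely many maps of the type produced by Lemma~\ref{lemma1}, one per connected boundary component involved, combined with a final correction that restores tangency to the identity at the prescribed interior points $a_2,\dots,a_k$. Since Lemma~\ref{lemma1} is stated for a \emph{simply connected} $G$, whereas $D$ here is only assumed smoothly bounded and relatively compact, the first thing I would do is reduce to that case: pick a smooth Jordan arc $\gamma$ in $R\bs D$ from $a_1$ to $b$ that meets $\overline D$ only at $a_1$ and is transverse to $bD$ there (this exists because $b$ lies in the same component of $R\bs D$ as $a_1$). The point is that the diffeomorphism we seek is supported near $\overline D\cup\gamma$ and, by the argument at the start of the proof of Lemma~\ref{lemma1}, one may pass to a simply connected neighborhood in $R$ — or, more robustly, uniformize a neighborhood of the relevant boundary component — so that Lemma~\ref{lemma1} applies to a simply connected model domain while the rest of $D$ is moved only slightly.

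First I would apply Lemma~\ref{lemma1} (with $G$ the relevant component, $V'$ a small neighborhood of $a_1$ inside $U$, and $V$ a neighborhood of $\gamma$) to obtain conformal diffeomorphisms $\psi_n\colon \overline D\to R$ with $\psi_n(a_1)=b$, $\psi_n\to\mathrm{id}$ locally uniformly on $D$, and $\psi_n(\overline{V'}\cap\overline D)\subset V''\cup V$. Properties (1) and (2) of the Lemma are then immediate for $\psi_n$ with $n$ large. For property (4), the smooth convergence $\psi_n\to\mathrm{id}$ away from $a_1$ — which on the simply connected model follows from Rado's theorem as in the proof of Lemma~\ref{lemma1}, and transfers back to $\overline D\bs U$ — gives the desired closeness to the identity on $\overline D\bs U$; here one must be a little careful to keep $a_2,\dots,a_k$ inside the region $\overline D\bs\overline{V'}$ where convergence is uniform, which is legitimate since those points are distinct from $a_1$ and $V'$ can be shrunk.

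The one remaining issue is property (3): $\psi_n$ is close to the identity near $a_2,\dots,a_k$ but need not be \emph{tangent to order $N$} there. To fix this I would compose with a correcting automorphism. Since $\psi_n\to\mathrm{id}$ in the smooth topology on a neighborhood of $\{a_2,\dots,a_k\}$, the difference from the identity, together with its derivatives up to order $N$ at these points, is small. I would then use a standard interpolation/correction device: choose a holomorphic (or conformal) automorphism $\theta_n$ of a neighborhood of $\overline D\bs U$ in $R$ — for instance obtained from an explicit perturbation supported near the $a_j$ by solving a $\overline\partial$-problem with bounds, or by an Andersén--Lempert-type argument if $R$ embeds suitably — that is close to the identity and whose $N$-jet at each $a_j$ cancels the deviation of $\psi_n$. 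Setting $\phi=\theta_n\circ\psi_n$ then yields all four conclusions. Alternatively, and perhaps more cleanly within the Riemann-surface setting, one can incorporate the tangency requirement directly into the approximation step that produces the $f_n$ in Lemma~\ref{lemma1}: the Mergelyan-type approximation with jet-interpolation at finitely many interior points (as in \cite[Theorem 3.2]{FF:submersions}) lets one demand from the outset that $f_n$, hence $\psi_n$, agrees with the identity to order $N$ at $a_2,\dots,a_k$, and then $g_n$ in that proof already fixes $p$ and can be taken to fix these jets as well.

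The main obstacle I expect is precisely this bookkeeping around property (3) together with the reduction from a general smoothly bounded $D$ to the simply connected hypothesis of Lemma~\ref{lemma1}: one must simultaneously move one boundary point of one component to a prescribed exterior point, keep the conformal structure exact (the map is biholomorphic on $D$, not merely quasiconformal), preserve prescribed finite jets at several interior points, and stay $\cC^\infty$-close to the identity everywhere off a small neighborhood of $a_1$. Each ingredient is available in the cited references, so the work is in threading them together; the jet-interpolation refinement of the approximation lemma is the natural place to absorb condition (3) without introducing a separate correction step.
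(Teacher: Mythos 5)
There is a genuine gap, and it sits exactly where you wave your hands: the ``reduction from a general smoothly bounded $D$ to the simply connected hypothesis of Lemma~\ref{lemma1}''. Lemma~\ref{lemma1} is proved by composing an approximating injection $f_n$ with a Riemann map $g_n\colon G\to\Omega_n$ onto the domain obtained by attaching a thin strip along $l_n$; such a conformal equivalence $G\cong\Omega_n$ exists only because $G$ is simply connected. For a general $D$ (multiply connected, positive genus), $D$ and $D\cup(\text{strip})$ are in general \emph{not} conformally equivalent --- attaching the strip changes the moduli --- so no analogue of $g_n$ exists and Lemma~\ref{lemma1} cannot be applied to $D$ or to ``the relevant component''. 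Passing to a collar of the boundary component or to its universal cover does not help either: you would then have a conformal map defined only on a piece of $D$ near $a_1$, and you cannot extend it by the identity across the inner edge of the collar and remain holomorphic. This matching problem is the heart of the lemma, not bookkeeping.

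The paper resolves it as follows. Lemma~\ref{lemma1} is applied not to $D$ but to an auxiliary \emph{simply connected} domain $G$ inside a coordinate neighborhood $W$ of the arc $\gamma$, chosen so that $\overline D\cap U\subset G\cup\{a_1\}$ and so that $G$ osculates $D$ from the outside to second order at $a_1$. One then forms a Cartan pair $A=\{\tau\le c\}\setminus U'$, $B=\{\tau\le c\}\cap\overline U$ with $C=A\cap B$ compactly contained in $G$, writes $\mathrm{id}=\psi_n\circ\gamma_n$ near $C$ with $\gamma_n=\psi_n^{-1}$ close to the identity, and invokes the splitting theorem \cite[Theorem 4.1]{F:Acta} to obtain $\gamma_n\circ\alpha_n=\beta_n$ near $C$, where $\alpha_n$ is holomorphic near $A$ and tangent to the identity to order $N$ at $a_2,\dots,a_k$ (this is where your condition (3) is absorbed, with no separate correction step needed), and $\beta_n$ is holomorphic near $B$ and tangent to the identity to order $N>2$ at $a_1$. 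The map $\phi_n$ is then defined as $\alpha_n$ on $A\cap\overline D$ and as $\psi_n\circ\beta_n$ on $B\cap\overline D$; the second-order osculation of $G$ at $a_1$ together with the expansion $\beta_n(z)=z+M_nz^N+O(z^{N+1})$, $N>2$, is exactly what guarantees $\beta_n(\overline D\cap U_1)\subset(G\cup\{a_1\})\cap V'$, so that the composition with $\psi_n$ is defined, and one must still verify injectivity of the glued map. None of this machinery --- the osculating domain, the Cartan-type splitting, the order-$N$ tangency at $a_1$ used to keep $\beta_n(\overline D)$ inside $G$ --- appears in your proposal, and without it the construction does not close up.
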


\begin{proof}
We may assume that $N>2$.  Choose a smooth
embedded Jordan arc $\gamma \subset R$ with the endpoints $a_1$ and
$b$ such that $\gamma\cap \overline D=\{a_1\}$, and the tangent line
to $\gamma$ at $a_1$ intersects the tangent line to $bD$ at $a_1$
transversely. Then $\gamma$ has an open, connected and simply
connected neighborhood $W\subset R$ that is conformally equivalent
to a bounded domain (a disc) in $\C$. Let $z$ denote the
corresponding holomorphic coordinate on $W$, chosen such that
$z(a_1)=0$. By shrinking the neighborhood $U$ of the point $a_1$ we
may assume that $\overline U\subset W$, that $\overline U$ does
not contain any of the points $a_2,...,a_k$, and that 
$z(U)=r\D \subset\C$ for some $r>0$. Choose a number $r' \in (0,r)$ and let
$U'\subset U$ be chosen such that $z(U')=r'\D$.

Choose a connected and simply connected domain $G\subset W$ with
smooth boundary, with a defining function $\rho$ such that
$G=\{\rho<0\}$ and $d\rho\ne 0$ on $b\,G$, satisfying the following
properties (see Figure \ref{Fig2}): 
\begin{itemize}
\item[(i)]   $\overline D\cap U \subset G\cup \{a_1\}$,
\item[(ii)]  $-\rho(z) \ge \mathrm{const}\cdotp \mathrm{dist}(z,a_1)^2$
for points $z\in bD$ close to $a_1$,  and
\item[(iii)]  $\gamma\cap \overline G = \{a_1\}$.
\end{itemize}
Property (iii) can be achieved since the arc $\gamma$
is transverse to $bD$ at $a_1$.

%
%
%

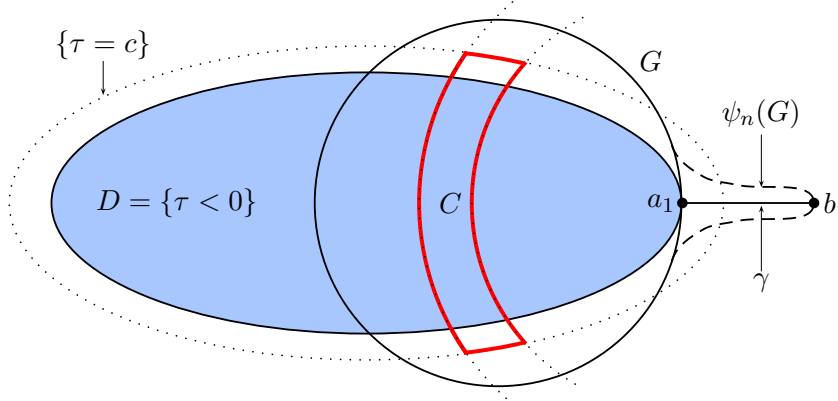
\begin{figure}[ht]
\psset{unit=0.7cm, linewidth=0.7pt}

\begin{pspicture}(-8,-3.5)(10,4)

\psarc[linewidth=1.5pt,linecolor=red](6,0){5}{145}{215}

%
%
\psellipse[fillstyle=solid,fillcolor=myblue](0,0)(6,2.5)
\rput(-3.6,0){$D=\{\tau<0\}$}
%
%
\psellipse[linestyle=dotted](0,0)(6.8,3)
\rput(-5,3){$\{\tau = c\}$}
\psline[linewidth=0.2pt]{<-}(-5,2.05)(-5,2.7)

%
%
\pscircle(2.5,0){3.5}
\rput(5.4,2.7){$G$}

%
%
\psline(6,0)(8.5,0)
\psdot[dotsize=4pt](6,0)
\psdot[dotsize=4pt](8.5,0)
%
%
\rput(7.5,-1.5){$\gamma$}
\psline[linewidth=0.2pt]{<-}(7.5,-0.05)(7.5,-1.3)

\rput(5.6,0){$a_1$}
\rput(8.8,0){$b$}

%
%
\psarc[linewidth=1.5pt,linecolor=red](6,0){5}{145}{215}
\psarc[linewidth=1.5pt,linecolor=red](6,0){4}{138}{222}

%
%
\psarc[linestyle=dotted](6,0){5}{130}{230}
\psarc[linestyle=dotted](6,0){4}{118}{242}
%
%
\psecurve[linewidth=1.5pt,linecolor=red](0,3)(1.85,2.85)(3,2.65)(5,2)
\psecurve[linewidth=1.5pt,linecolor=red](0,-3)(1.85,-2.85)(3,-2.65)(5,-2)
%
%
\rput(1.6,0){$C$}

%
%
\psecurve[linestyle=dashed]
(6,2)(5.8,1.1)(6.4,0.5) (7,0.35) (8.5,0)
(7,-0.35)(6.4,-0.5)(5.8,-1.1)(6,-2)

\rput(7.5,1.7){$\psi_n(G)$}
\psline[linewidth=0.2pt]{<-}(7.5,0.3)(7.5,1.3)

\end{pspicture}
\caption{The domains $D$ and $G$}
\label{Fig2}
\end{figure}

Choose a smooth defining function $\tau$ for the domain
$D$ such that $D=\{\tau<0\}$ and $d\tau\ne 0$ on $bD=\{\tau=0\}$.
Choose a small number $c>0$ and let
\[
    A=\{\tau\le c\} \bs U',     \quad
    B=\{\tau\le c\} \cap \overline U, \quad
    C=\{\tau\le c\} \cap (\overline U\bs U').
\]
By choosing $c>0$ small enough we insure that $C$
is a compact set contained in $G$ (see Figure \ref{Fig2}),
and we have
\[
    A\cup B = \{\tau\le c\}, \quad A\cap B = C.
\]
On Figure \ref{Fig2}, the set $C$ is bounded by the two circular arcs
(left and right) and by the two arcs in the larger dotted ellipse
representing the level set $\{\tau= c\}$.
The set $A$ is the part of the filled dotted ellipse
lying on the left hand side of the right boundary arc of $C$, and
$B$ is the part of the filled dotted ellipse on the
right hand side of the left boundary arc of $C$.

Choose small open neighborhoods $V'\Subset V''$ of the point $a$
such that $\overline V''$ is contained in the interior of the set
$B\bs A$, and choose a small neighborhood $V$ of $\gamma$ such
that $\overline V\cap\overline{(A\bs B)}\cap\overline
D=\emptyset$. Let $\psi_n\colon \overline G\to \psi_n(\overline G)$ 
be a sequence of conformal maps furnished by Lemma
\ref{lemma1}, satisfying the properties of that lemma
with respect to the sets $V,V',V''$. 
Recall that the compact set $C$ is contained in $G$.
Choose an open set $C'\Subset G$ containing $C$. On $C'$ we write
\[
    \mathrm{id} = \psi_n\circ \gamma_n,\quad \gamma_n=\psi_n^{-1}.
\]
As $n\to +\infty$, $\psi_n$ converges to the identity uniformly
on $C'$, and hence also in the smooth topology (by the Cauchy estimates).
The same is then true for its inverse $\gamma_n$ on a slightly
smaller neighborhood of $C$.

We are now in position to apply \cite[Theorem 4.1]{F:Acta}
to the map $\gamma_n$. For every sufficiently large $n\in\N$,
the cited theorem furnishes a decomposition  
\[
    \gamma_n \circ \alpha_n =\beta_n \quad {\rm near\ } C,
\]
where $\alpha_n$ is a small holomorphic perturbation of the
identity map on a fixed neighborhood of $A$
(independent of $n$) that is tangent to the identity to
order $N$ at each of the points $a_2,\ldots,a_k$,
and $\beta_n$ is a small holomorphic perturbation of the
identity map on a neighborhood of $B$ that is tangent to the identity
to order $N$ at the point $a_1$.
The closeness of $\alpha_n$ (resp.\ of $\beta_n$)
to the identity in any $\cC^r$ norm on $A$ (resp.\ on $B$)
can be estimated by the closeness of $\psi_n$ to the identity
on $C'$. (This Cartan-type decomposition lemma
for biholomorphic maps close to the identity is one of the
most essential results used in our construction. Its proof in \cite{F:Acta}
applies to Cartan pairs in an arbitrary Stein manifold.)

By combining the above two displays we obtain
\[
      \alpha_n=\psi_n\circ\beta_n \quad{\rm near\ } C.
\]
If the approximations are sufficiently close (which holds for
$n$ large enough) then the two sides, restricted to
$A\cap\overline D$ (resp.\ to $B\cap \overline D$), define a
diffeomorphism $\phi_n \colon \overline D\to \phi_n(\overline D) \subset R$
that is holomorphic in $D$ and such that
\begin{itemize}
\item $\phi_n(a_1)=b$,
\item $\phi_n$ is tangent to the identity map
to order $N$ at each of the points $a_2,\ldots,a_k$, and
\item $\phi_n$ converges to the identity map
uniformly on $\overline D\bs U$ as $n\to +\infty$.
\end{itemize}

Indeed, both sides $\alpha_n$ and $\psi_n\circ\beta_n$
satisfy the stated properties on their respective domain.
For $\alpha_n$ this is clear from the construction.
For $\beta_n$ we need a more precise argument to see that
it maps $B\cap \overline D$ into $G\cup\{a\}$ for sufficiently large
$n\in\N$. By the construction, its Taylor expansion in a
local holomorphic coordinate $z$ near $a_1$, with $z(a_1)=0$, equals
\[
    \beta_n(z)=z + M_n z^N + O(z^{N+1}).
\]
The size of the constant $M_n$, and of the remainder term,
can be estimated (using the Cauchy estimates)
by $\mathrm{dist}(\beta_n,\mathrm{id})$ on $B$,
and hence by $\mathrm{dist}(\psi_n,\mathrm{id})$ on the set $C'$.
Since $G$ osculates $D$ from the outside to the second order
at the point $a_1$ (see property (ii) above), 
it follows that for a sufficiently small
neighborhood $U_1$ of the point $a_1$ and for all large enough
$n\in\N$ we have
\begin{equation}
\label{betan}
    \beta_n(\overline D\cap U_1) \subset (G\cup\{a_1\})\cap V'.
\end{equation}
On the complement $(B\cap \overline{D}) \bs U_1$,
$\beta_n$ is close to the identity for large $n$,
and hence it maps this set into a fixed compact set in $G$.
Thus the composition $\psi_n\circ\beta_n$ is well defined on
$B\cap \overline D$ and it satisfies the stated properties.

It is also easily seen that $\phi_n$ is injective if $n$ is large
enough. Indeed, each of the two expressions defining $\phi_n$ on
$A\cap \overline D$ (resp.\ on $B\cap \overline D$) is injective by the
construction, and hence it suffices to verify
that no point from $(A\bs B)\cap \overline D$ can
get identified with a point from $(B\bs A)\cap \overline D$
under $\phi_n$. By the construction, the points from the first
set remain nearby since $\alpha_n$ is close to the identity.
Consider now points $x\in (B\bs A)\cap \overline D$.
If $x\in U_1$ then $\beta_n(x)\in (G\cup\{a_1\})\cap V'$ by (\ref{betan}),
and hence $\psi_n\circ\beta_n(x) \in V''\cup V$
by property (iii) in Lemma \ref{lemma1}.
Since the set $V''\cup V$ is at a positive distance
from $(A\bs B)\cap \overline D$, we see that $\psi_n\circ\beta_n(x)$
cannot coincide with $\alpha_n(x')$ for any point 
$x'\in (A\bs B)\cap \overline D$ provided that $n$ is large enough.
The remaining set $((B\bs A)\cap \overline D) \bs U_1$ is
compactly contained in $B\cap \overline D\cap G$ where
$\psi_n\circ\beta_n$ is close to the identity for large $n$,
and hence no point from this set can get identified 
with a point from $(A\bs B)\cap \overline D$.
\end{proof}

Using Lemma \ref{main-step} inductively we now
prove the following result.

\begin{theorem}
\label{conformal-map}
Assume that $D$ is a relatively compact smoothly bounded
domain in a Riemann surface $R$. Choose finitely many pairwise distinct points
$a_1,\ldots,a_k\in bD$, $b_1,\ldots,b_k \in R\bs \overline D$, and
$c_1,\ldots,c_l\in \overline D\bs\{a_1,\ldots,a_k\}$
such that for each $j=1,\ldots,k$ the points
$a_j$ and $b_j$ belong to the same connected component of $R\bs D$.
For every integer $N\in\N $ there exists a diffeomorphism
$\phi\colon \overline D\to \overline D'$
onto a smoothly bounded domain $D'\subset R$ such that
$\phi\colon D\to D'$ is biholomorphic, $\phi(a_j)= b_j$
for $j=1,\ldots, k$, and $\phi$ is tangent to the identity map to
order $N$ at each point $c_j$. Furthermore, given a neighborhood
$U_j$ of $a_j$ for every $j$, $\phi$ can be chosen as close as
desired to the identity map in the smooth topology on
$\overline D\bs \bigcup_{j=1}^k U_j$.
\end{theorem}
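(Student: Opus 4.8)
The plan is to prove Theorem~\ref{conformal-map} by induction on $k$, the number of boundary points to be moved, applying Lemma~\ref{main-step} once at each stage while carrying along \emph{all} the previously achieved normalizations as tangency conditions. First I would set up the induction. The base case $k=0$ is trivial: take $\phi=\mathrm{id}$ (the tangency conditions at the $c_j$ are vacuously achieved, since the identity is tangent to itself to infinite order). For the inductive step, suppose the statement holds for $k-1$ pairs; I want to prove it for $k$ pairs $a_1,\dots,a_k$ and $b_1,\dots,b_k$, with the interior/boundary constraint points $c_1,\dots,c_l$.

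The key step is the application of Lemma~\ref{main-step} to the first pair $(a_1,b_1)$. I would invoke the lemma with the ordered list of ``frozen'' points being $a_1$ together with \emph{all} of $a_2,\dots,a_k,c_1,\dots,c_l$: that is, I apply Lemma~\ref{main-step} with the role of $(a_1;a_2,\dots)$ played by $(a_1; a_2,\dots,a_k,c_1,\dots,c_l)$, with the neighborhood $U=U_1$, the target point $b=b_1$ (which lies in the same component of $R\bs D$ as $a_1$ by hypothesis), and a tangency order $N'\ge N$ to be specified below. This yields a diffeomorphism $\phi_1\colon\overline D\to\overline{D_1}\subset R$, biholomorphic on $D$, with $\phi_1(a_1)=b_1$, tangent to the identity to order $N'$ at each of $a_2,\dots,a_k,c_1,\dots,c_l$, and as close to the identity as we wish on $\overline D\bs U_1$ in the smooth topology. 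Now set $a_j'=\phi_1(a_j)=a_j$ for $j=2,\dots,k$ and $c_m'=\phi_1(c_m)=c_m$ (these points are fixed exactly, not just to high order, because they lie outside $U_1$ if we shrink $U_1$ first so that $\overline{U_1}$ contains none of them — actually Lemma~\ref{main-step} only guarantees high-order tangency at them, so strictly speaking $a_j'\ne a_j$ in general; I address this below). Apply the inductive hypothesis inside the Riemann surface $R$ to the domain $D_1$, the $k-1$ pairs $(a_j',b_j)$ for $j=2,\dots,k$ (checking $a_j'$ and $b_j$ are in the same component of $R\bs D_1$, which holds since $\phi_1$ is $\cC^0$-close to the identity and $b_j\notin\overline{D_1}$), and the constraint points $c_m'$, obtaining $\phi_2\colon\overline{D_1}\to\overline{D'}$ with the desired properties and with $\phi_2$ close to the identity off small neighborhoods $U_j'$ of the $a_j'$. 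Then $\phi:=\phi_2\circ\phi_1$ is the required map: composing a diffeomorphism biholomorphic on the interior with another of the same kind gives one of the same kind; $\phi(a_1)=\phi_2(b_1)=b_1$ (choosing a tiny neighborhood of $b_1$ among the $U_j'$, or rather noting $b_1\notin bD_1$ so it is untouched), $\phi(a_j)=\phi_2(a_j')=b_j$ for $j\ge2$; and the tangency-to-order-$N$ at each $c_m$ follows from the chain rule applied to the composition of two maps each tangent to $\mathrm{id}$ to order $\ge N$ there.

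The main obstacle — and the reason I flagged it above — is the bookkeeping around the points $a_2,\dots,a_k$ and $c_1,\dots,c_l$ under the first map $\phi_1$: Lemma~\ref{main-step} only moves them ``to order $N'$'' back to where they were, i.e.\ $\phi_1$ fixes them to order $N'$ but need not fix them exactly, so after applying $\phi_1$ the boundary points are $a_j'=\phi_1(a_j)$, slightly perturbed, and one must re-run the induction with these perturbed points. The clean way to handle this is to choose the neighborhoods in Lemma~\ref{main-step} so that $\overline{U_1}$ is disjoint from $\{a_2,\dots,a_k,c_1,\dots,c_l\}$ and then observe that property~(4) of Lemma~\ref{main-step} (closeness to $\mathrm{id}$ on $\overline D\bs U_1$ in the \emph{smooth} topology) already forces $\phi_1$ to equal the identity \emph{near} those points up to an arbitrarily small error — but since they sit outside $U_1$, the construction in the proof of Lemma~\ref{main-step} in fact leaves a fixed neighborhood of each untouched (the map $\alpha_n$ there is the identity near $A\setminus U_1$ after composing with the identity, or more precisely one arranges $\phi_1\equiv\mathrm{id}$ on a neighborhood of each such point). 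If one does not want to re-open the proof of Lemma~\ref{main-step}, the alternative is simply to accept $a_j'\ne a_j$, carry the perturbed points through the induction, and note that since the total number of steps is the fixed integer $k$ and at each step we may demand arbitrarily good approximation, the accumulated errors are controlled; a standard ``choose $\varepsilon_j$ small depending on later steps'' argument closes the gap. Finally, the tangency orders: since each of the $k$ composed maps is tangent to $\mathrm{id}$ to order $N'$ at the $c_m$, the composite is tangent to order $N'$; choosing $N'=N$ suffices, so no inflation of the order is actually needed. The closeness of the final $\phi$ to $\mathrm{id}$ on $\overline D\bs\bigcup U_j$ follows because each factor is close to $\mathrm{id}$ off the relevant small neighborhoods and composition of near-identity maps stays near-identity on the common good region.
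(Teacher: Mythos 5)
Your proposal follows essentially the same route as the paper: the published proof likewise builds $\phi=\phi_k\circ\cdots\circ\phi_1$ by applying Lemma~\ref{main-step} once per pair $(a_j,b_j)$, freezing all the remaining marked points via tangency-to-the-identity conditions of order $N'=\max\{2,N\}$ at each stage. The ``obstacle'' you flag is not one: tangency to the identity to order $N\ge 1$ at a point means the local expansion is $z\mapsto z+O(z^{N})$ in a coordinate vanishing there, so the point is fixed \emph{exactly} --- hence $\phi_1(a_j)=a_j$ and $\phi_1(c_m)=c_m$ on the nose (which is precisely what the paper uses to keep these points on $bD_1$ and in $\overline D_1$ for the next step), and your fallback of carrying perturbed points $a_j'\ne a_j$ through the induction would in fact fail to recover the exact conditions $\phi(a_j)=b_j$ and tangency at the original $c_m$, so it is fortunate that it is unnecessary.
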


\begin{proof}
By decreasing the neighborhoods $U_j\ni a_j$ we may assume
that their closures are pairwise disjoint and do not contain
any of the points $c_j$. Choose smaller neighborhoods
$U'_j \ni a_j$ with $\overline U'_j\subset U_j$ for 
$j=1,\ldots, k$.

A map $\phi$ with the desired properties will be found as a composition
\[
    \phi= \phi_k\circ\phi_{k-1}\circ\cdots \circ
    \phi_2\circ \phi_1\colon \overline D\to \overline D'.
\]
In the first step, Lemma \ref{main-step} furnishes a
diffeomorphism  $\phi_1\colon \overline D \to \phi_1(\overline D)= \overline D_1$
onto a new domain $\overline D_1 \subset R$ such that
\begin{enumerate}
\item $\phi_1$ is biholomorphic in the interior,
\item $\phi_1(a_1)=b_1$,
\item $\phi_1$ is tangent to the identity
to order $N'=\max\{2,N\}$ at each of the points
$a_2,\ldots, a_k$ and $c_1,\ldots, c_l$, and
\item $\phi_1$ is uniformly close to the identity on $\overline D\bs U'_1$.
\end{enumerate}
Hence the points $b_1=\phi_1(a_1)$, $a_2,\ldots,a_k$ lie on $bD_1$,
and $c_j\in \overline D_1$ for $j=1,\ldots,l$.

In the second step we apply Lemma \ref{main-step},
with $\overline D$ replaced by $\overline D_1=\phi_1(\overline D)$,
to find a diffeomorphism
$\phi_2\colon \overline D_1\to \phi_2(\overline D_1)= \overline D_2$,
holomorphic in the interior and close to the identity map
on $\overline D_1\bs U'_2$, such that $\phi_2(a_2)=b_2$,
$\phi_2$ is tangent to the identity to order $N'$ at the points
$b_1, a_3,\ldots, a_k$ and $c_1,\ldots,c_l$, and $\phi_2$ is close
to the identity map on $\overline D_1 \bs U'_2$.

Continuing inductively, we obtain after $k$ steps a map
$\phi$ satisfying the conclusion of Theorem \ref{conformal-map}
with $D'=D_k$. At the $j$th step of the construction,
the action takes place near the point $a_j\in bD_{j-1}$
that is mapped by $\phi_j$ to the point $b_j\in bD_j=\phi_j(bD_{j-1})$.
In addition, $\phi_j$ is tangent to the identity at the points
$b_1,\ldots, b_{j-1}$, $a_{j+1},\ldots, a_k$ and $c_1,\ldots,c_l$,
and $\phi_j$ is close to the identity map on $\overline D_{j-1}\bs U'_j$.

The final domain $D'=D_k=\phi(D)$ contains the
points $b_1,\ldots,b_k$ in the boundary, while the points
$c_1,\ldots,c_l$ remained fixed during the construction.
The domain $D'$ is very close to $D$ away from a
small neighborhood of each point $a_j$, and at $a_j$
it includes a spike reaching out to $b_j$.
\end{proof}

\section{Normalization and stability of complex curves in $\C^2$}
\label{proof-curves}
In this section we obtain some technical results
that will be used in the proof of Theorem \ref{curves}
in the case of curves with interior singularities.

The first lemma gives a normalization of a complex curve
with smooth boundary by a bordered Riemann surface.

\begin{lemma}
\label{normalize}
Let $\overline \Sigma$ be a compact complex curve with boundary of 
class $\cC^r$ $(r\ge 1)$ in a complex manifold $X$.
There exists a bordered Riemann surface $\overline D$ with 
$\cC^r$ boundary and a $\cC^r$ map $f \colon \overline D\to X$,
with $f(\overline D)=\overline \Sigma$ and $f(bD) = b\Sigma$,   
such that $f$ is a diffeomorphism  near $bD$ and
$f\colon D\to \Sigma$ is a holomorphic normalization of $\Sigma$.
In particular, $f$ is biholomorphic over the regular locus of $\Sigma$.
\end{lemma}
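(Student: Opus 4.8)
The plan is to construct the normalization by working separately near the boundary and in the interior, then glue. Near $b\Sigma$, the hypothesis says $\overline\Sigma$ is a one dimensional complex manifold with $\cC^r$ boundary; thus a collar neighborhood of $b\Sigma$ in $\overline\Sigma$ is already a bordered Riemann surface (a disjoint union of annular pieces), and here $f$ will simply be the identity-type inclusion, in particular a $\cC^r$ diffeomorphism onto its image. The only subtlety is that $\overline\Sigma$ has finitely many interior singular points $p_1,\dots,p_m\in\Sigma$; away from these, $\Sigma_{\mathrm{reg}}=\Sigma\setminus\{p_1,\dots,p_m\}$ is a (non-compact) one dimensional complex manifold.

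First I would invoke the classical existence of the normalization of a reduced complex space: there is a normal complex space $\wt\Sigma$ and a finite holomorphic map $\nu\colon\wt\Sigma\to\Sigma$ that is biholomorphic over $\Sigma_{\mathrm{reg}}$, with $\nu^{-1}(\Sigma_{\mathrm{sing}})$ finite. Since $\wt\Sigma$ is a normal one dimensional complex space it is a (smooth) Riemann surface. Over the collar neighborhood of $b\Sigma$, where $\Sigma$ is already smooth, $\nu$ is a biholomorphism, so we may transport the $\cC^r$ boundary structure: $\wt\Sigma$ acquires a $\cC^r$ bordered-Riemann-surface structure $\overline D:=\wt\Sigma\cup\nu^{-1}(b\Sigma)$, and $bD=\nu^{-1}(b\Sigma)$ consists of finitely many $\cC^r$ Jordan curves. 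The map $f$ is then defined as the composite $\overline D\xrightarrow{\ \nu\ }\overline\Sigma\hookrightarrow X$: it is holomorphic on $D$, restricts to the normalization $D\to\Sigma$, and near $bD$ agrees with the biholomorphism $\nu$, hence is a $\cC^r$ diffeomorphism onto a neighborhood of $b\Sigma$ in $\overline\Sigma$. Finally $f(\overline D)=\overline\Sigma$, $f(bD)=b\Sigma$, and $f$ is biholomorphic over $\Sigma_{\mathrm{reg}}$, which is all that is claimed.

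I expect the main point requiring care to be the $\cC^r$ regularity of $bD$ and of $f$ up to the boundary. Since the normalization map is a biholomorphism on a full neighborhood (in $\overline\Sigma$) of $b\Sigma$ — because those points are smooth points of $\Sigma$ — this is immediate: there is nothing to prove beyond observing that a local holomorphic chart for $\overline\Sigma$ near a boundary point is automatically a chart for $\overline D$, so $bD$ is as smooth as $b\Sigma$, and $f$ extends $\cC^r$-smoothly (indeed holomorphically in the interior direction) across $bD$. A secondary bookkeeping point is that $\overline\Sigma$ (and hence $\overline D$) need not be connected; one simply applies the above to each connected component. No compactness or properness issue arises because $\overline\Sigma$ is already compact and $bD$ compact, so $\overline D=\wt\Sigma\cup bD$ is the desired compact bordered Riemann surface.
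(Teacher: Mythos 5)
Your proposal is correct and follows essentially the same route as the paper: both reduce the statement to normalizing the finitely many interior singularities and observing that the normalization is biholomorphic over a collar of $b\Sigma$, so the $\cC^r$ boundary structure and the boundary behaviour of $f$ transport unchanged. The only difference is that the paper constructs the normalization by hand, replacing each local irreducible branch at a singular point by a disc via its Puiseux parametrization, whereas you invoke the general normalization theorem for reduced complex spaces.
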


\begin{proof}
To get such $D$ and $f$ we simply normalize each singular point of $\Sigma$
(see e.g.\ \cite[p.\ 70]{Chirka} for  curves without boundaries);
we briefly describe this construction.
The conditions imply that $\Sigma$ has at most finitely many interior
singularities $p_1,\ldots,p_n\in \Sigma$ and no singularities on $b\Sigma$.
Choose a small open set $B_j\subset X$ containing $p_j$  
(in local coordinates at $p_j$, $B_j$ is a small ball) and let 
$\Sigma\cap B_j=\bigcup_{k=1}^{m_j} V_{j,k}$ be a decomposition
into irreducible branches. By choosing $B_j$ sufficiently small
we insure that each $V_{j,k}\bs\{0\}$ is regular, 
$V_{j,k}\cap V_{j,k'}=\{p_j\}$ when $k\ne k'$, 
and the normalization of each $V_{j,k}$ is  a disc in $\C$.
More precisely, there is an injective holomorphic map 
$\psi_{j,k}\colon  \D \to V_{j,k}$, with $\psi_{j,k}(0)=p_j$,
such that $\psi_{j,k} \colon \D\bs\{0\} \to V_{j,k}\bs \{p_j\}$
is biholomorphic. By surgery with $\psi_{j,k}$ we replace 
$V_{j,k}\subset \Sigma$ by the disc $\D$, hence 
$\Sigma\cap B_j$ is replaced by the disjoint union of $m_j$ discs. 
To get $D$ and $f$ it suffices to perform  this construction at every singular 
point $p_j$ of $\Sigma$.
\end{proof}

The following lemma result is a special case of the classical
results on universal denominators (see e.g.\ Whitney \cite{Whitney}).
For completeness we provide a simple proof for curves in $\C^2$
by using a solution to $\dibar$-equation. 
(We thank J.-P.\ Rosay for suggesting such a proof.)

\begin{lemma}
\label{weakly-holo}
Let $V$ be pure one dimensional analytic subvariety near the origin 
in $\C^2$ with $V_{\rm sing}=\{0\}$.
There is an integer $N\in \N$ such that every holomorphic function 
$g$ on $V^*=V\bs\{0\}$ satisfying $|g(z)|\le C|z|^N$ for some
$C>0$ extends across $0$ to a holomorphic function on $V$.
\end{lemma}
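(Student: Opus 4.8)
The plan is to reduce the statement to a $\dibar$-problem with bounds on $\C$, following the suggestion attributed to Rosay. First I would normalize: since $V$ is a pure one-dimensional analytic subvariety near $0$ in $\C^2$ with $V_{\rm sing}=\{0\}$, by Lemma \ref{normalize} (or directly) there is a holomorphic normalization map $\pi\colon \bigsqcup_{k=1}^m \D \to V$ which is biholomorphic over $V^*=V\bs\{0\}$ and sends the centers to $0$. Writing $\pi$ in the ambient coordinates $(z_1,z_2)$ of $\C^2$, each branch is parametrized by $t\mapsto \pi_k(t)=(z_1^{(k)}(t), z_2^{(k)}(t))$, holomorphic on $\D$ and vanishing to some order $d_k\ge 1$ at $t=0$; in particular $|\pi_k(t)|\le C_k|t|$ and, crucially, $|\pi_k(t)|\ge c_k|t|^{d_k}$ for $|t|$ small, since $\pi_k$ is not identically zero and is injective near $0$. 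Set $d=\max_k d_k$.

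Next, given $g$ holomorphic on $V^*$ with $|g(z)|\le C|z|^N$, pull it back: $g_k:=g\circ \pi_k$ is holomorphic on $\D\bs\{0\}$ and satisfies $|g_k(t)| = |g(\pi_k(t))| \le C|\pi_k(t)|^N \le C C_k^N |t|^N$. Hence $g_k$ has a removable singularity at $0$ and extends holomorphically across $0$, vanishing to order $\ge N$ there. So $g$ lifts to a holomorphic function $\wt g$ on the normalization $\bigsqcup_k \D$, and the task is to push $\wt g$ back down to a genuine holomorphic function on $V$ near $0$, i.e.\ to find a holomorphic $h$ on $V$ with $h\circ\pi = \wt g$. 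Equivalently, writing the local ring $\cO_{V,0}$ inside its normalization $\widetilde{\cO}=\bigoplus_k \C\{t\}$, we must show $\wt g\in \cO_{V,0}$ provided each component of $\wt g$ vanishes to order $\ge N$ for $N$ chosen large enough (depending only on $V$). The classical fact is that the conductor ideal of $\cO_{V,0}$ in $\widetilde{\cO}$ is nonzero, so it contains $\mathfrak{m}_{\widetilde{\cO}}^{N}$ for some $N$; that is the integer we want. To make this concrete and self-contained via $\dibar$ as promised: choose a smooth cutoff $\chi$ supported near $0$ with $\chi\equiv 1$ near $0$, form the function $G:=\chi\cdot(\wt g\text{ transported to }\C^2)$ — more precisely extend $\wt g$ off $V$ in a $\cC^\infty$ way on a neighborhood in $\C^2$, say as $G_0$, then $G:=\chi G_0$ — and solve $\dibar u = \dibar G$ on a small polydisc in $\C^2$ with the estimate that $u$ vanishes to high order at $0$ (this is where the $|z|^N$ hypothesis enters: $\dibar G$ is $\cC^\infty$, supported away from $0$ in the region where $\chi$ is not locally constant but "small" of order $N$ there, so one can solve with $u=O(|z|^{N'})$ by a standard weighted $\dibar$ estimate on $\C^2$, e.g.\ multiplying by an appropriate power of a linear function and using Hörmander's $L^2$ method or an explicit kernel). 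Then $h:=G-u$ is holomorphic on the polydisc and agrees with $\wt g$ on $V$ because $G-u$ restricted to $V$ equals $\wt g$ minus $u|_V$ which vanishes by the vanishing order and a dimension/weight bookkeeping, so $h|_V$ is the desired holomorphic extension of $g$.

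The main obstacle is the bookkeeping in the $\dibar$ step: one must arrange that the solution $u$ of $\dibar u=\dibar G$ not only exists but vanishes on $V^*$ to the same order as $\wt g$ does — equivalently vanishes at $0$ to order exceeding $d\cdot(\text{something})$ — so that $h|_V=\wt g$ rather than merely $h|_V=\wt g + (\text{error in the ideal of }V)$. Concretely, one wants to choose the cutoff $\chi$ and a defining function $\varrho$ of $V$ so that $\dibar G$ is divisible by $\varrho^{N_0}$ for suitable $N_0$ forcing divisibility of the minimal-norm solution $u$ by $\varrho$; then $u|_V\equiv 0$ automatically. Quantifying "how large $N$ must be" in terms of the multiplicities $d_k$ and the number of branches $m$ is the only delicate point, and it is exactly the statement that the conductor contains a power of the maximal ideal; a clean way to nail the constant is to invoke the Weierstrass preparation / resultant description of $V$ near $0$ (after a linear change of coordinates making $V$ finite over the $z_1$-axis) and take $N$ larger than the $z_1$-degree of the discriminant, which is the traditional "universal denominator" bound.
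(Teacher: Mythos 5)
Your reduction to the normalization and your observation that the lemma is equivalent to the conductor ideal of $\cO_{V,0}$ containing a power of the maximal ideal are both correct, and the paper itself notes that the lemma is a special case of the classical theory of universal denominators (Whitney). But the self-contained $\dibar$ argument you sketch has a genuine gap exactly at the point you flag as ``the main obstacle'', and your proposed fix does not close it. You set $h=G-u$ with $\dibar u=\dibar G$ and then need $u|_{V^*}=0$; your suggestion is that divisibility of $\dibar G$ by $\varrho^{N_0}$ forces divisibility of the minimal-norm solution $u$ by $\varrho$. That implication is false as stated: divisibility of the data by a holomorphic function does not pass to solutions of $\dibar u=\alpha$ without a weighted estimate designed for that purpose, and you have not set one up. Moreover, your extension step is too loose for any such estimate to get started: an arbitrary $\cC^\infty$ extension $G_0$ of $\wt g$ off $V$ gives a form $\dibar(\chi G_0)$ that need not vanish on, or even near, $V$, so there is nothing to divide by $\varrho^{N_0}$ in the first place.

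The paper avoids both problems with two devices missing from your sketch. First, the extension of $g$ off $V$ is not an arbitrary smooth one: after arranging $V$ to be a branched cover over the $z_1$-axis with fibers $\{b_1(z_1),\ldots,b_m(z_1)\}$ separated by $c|z_1|^{\nu}$, one extends $g$ as a \emph{constant in $z_2$} on each fiber disc $W_j(z_1)$ of radius $\frac{c}{4}|z_1|^{\nu}$; then $\chi g$ is genuinely holomorphic on the inner tube $W$, so $\dibar(\chi g)$ is supported in $W'\bs W$, at distance comparable to $|z_1|^{\nu}$ from $V$. Second, and crucially, the corrected extension is taken in the form $G=\chi g - uP$ with $P$ a Weierstrass polynomial defining $V$: then $G|_{V^*}=g$ automatically, for \emph{any} $u$, because $P$ vanishes on $V$; no control of $u$ on $V$ is ever needed. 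The $\dibar$ equation becomes $\dibar u = P^{-1}g\,\dibar\chi$, which is where the hypothesis $|g(z)|\le C|z|^{N}$ enters (one needs $N\ge \mu+\nu+2$, with $|P|\ge |z_1|^{\mu}$ on the support of $\dibar\chi$, to make the right-hand side bounded and vanishing at $z_1=0$), and it is solved by the explicit one-variable Cauchy transform in $z_2$. If you want to keep your version, you must either adopt this ansatz or genuinely carry out a weighted $L^2$ estimate with a weight singular along $V$, which is considerably more work.
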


\begin{proof}
Write $z=(z_1,z_2)\in\C^2$ and let $\pi_j(z_1,z_2)=z_j$ for $j=1,2$.  
After shrinking $V$ and applying a linear change of coordinates on $\C^2$ 
we may assume that $\pi_1|_V \colon V\to U$ is a branched analytic 
covering over a disc $U=r\D \subset \C$ such that $|z_2|\le |z_1|$ on $V$
(see e.g.\ \cite[\S 6.1]{Chirka}). By shrinking $U$ around $0$
we have for each $z_1\in U^*= U\bs \{0\}$ 
\[
	\pi_2\bigl(V\cap \pi_1^{-1}(z_1)\bigr) 
	= \{b_1(z_1), \ldots, b_m(z_1)\} \subset \C,
\]
where the functions $b_j(z_1)$ are locally holomorphic and
satisfy an estimate 
\[
	|b_j(z_1)-b_k(z_1)|\ge c|z_1|^\nu
\]
for some $\nu\ge 1$, $c>0$ and for all $j\ne k \in\{1,\ldots,m\}$. 
For irreducible $V$ this estimate follows from the Puiseux
series representation, see \cite[p.\ 68]{Chirka}.
In general we use that any two complex curves 
have a finite order of tangency at an isolated intersection point.

Let $W_{j}(z_1)\subset \C$ be the disc of radius 
$\frac{c}{4}|z_1|^\nu$ centered at $b_j(z_1)$,
and let $W'_j(z_1)$ denote the disc of twice that radius; 
hence the larger discs are still pairwise disjoint. Set 
\[
	W=\{(z_1,z_2) \colon z_1\in U^*,\ 
	z_2\in W_j(z_1)\ {\rm for\ some}\ j=1,\ldots,m\}.
\]
Similarly we define the set $W'\supset W$ by taking the union of the discs 
$W'_j(z_1)$ in the fibers. Observe that the distance from a point 
$(z_1,z_2) \in W$ to the complement of $W'$ is comparable to 
$|z_1|^{\nu}$ as $z_1\to 0$, and hence there is a smooth function 
$\chi$ on $U^* \times \C$ with values
in $[0,1]$ such that $\chi=1$ on $W$, $\rm supp \chi \subset W'$,
and $|\dibar \chi(z)| \le c'|z_1|^{-\nu}$ for some $c'>0$.

Choose a holomorphic function $P(z_1,z_2)$
on $U\times\C$ with $V=\{P=0\}$. In fact, $P$ can be chosen as
a Weierstrass polynomial in $z_2$, with coefficients holomorphic
in $z_1\in U$ (see e.g.\ Chirka \cite[p.\ 25]{Chirka}).

Suppose that $g\colon V^*=V\bs\{0\} \to\C$ is a holomorphic function.
We extend $g$ to a holomorphic function on the tube $W'$ by taking
a constant vertical extension on the fiber around each point
$b_j(z_1)$. More precisely, for $(z_1,z_2)\in W'_j(z_1)$
we take $g(z_1,z_2)=g(z_1,b_j(z_1))$. Then $\chi g$ is a well defined
smooth function on $U^*\times\C$ which is holomorphic in $W$
and agrees with the original function $g$ on $V^*$. Note also that
$\dibar (\chi g)= g\dibar  \chi$ is supported in $W'\bs W$
and satisfies $|g\dibar \chi|\le c' |g| |z_1|^{-\nu}$.
We seek a holomorphic extension $G$ of $g$ in the form 
$G= \chi g - uP$; this implies $G=g$ on $V^*$. The holomorphicity condition 
$0=\dibar G = g\dibar\chi - P\dibar u$ is equivalent to 
\[
	\dibar u = \alpha:= \frac{1}{P}\, g \, \dibar \chi = 
	\alpha_1d\bar z_1 + \alpha_2 d\bar z_2.
\]
On the support of $\alpha$ (in $W'\bs W$) we have 
$|P(z_1,z_2)| \ge |z_1|^{\mu}$ for some $\mu>0$. If $N\ge \mu+\nu+2$, 
the estimate $|g(z_1,z_2)|\le |z_1|^N$ for $(z_1,z_2)\in V^*$
implies that $|\alpha(z_1,z_2)|\le c' |z_1|^2$.
For such $\alpha$, the equation $\dibar u=\alpha$ has a solution 
on $U\times\C$ given by
\[
	u(z_1,z_2)=\frac{1}{2\pi i} \int\!\!\!\int_{t\in\C} \frac{\alpha_2(z_1,t)}{t-z_2} 
							\, dt\wedge d\bar t.
\]	
(The integrand is compactly supported for each 
fixed $z_1\in U$, and it vanishes for $z_1=0$.) 
This yields a desired holomorphic extension $G$ of $g$.
\end{proof}

Our next result shows that the biholomorphic type of a holomorphic image 
of a bordered Riemann surface in $\C^2$ does not change under a perturbation 
of the map that is tangent to a sufficiently high order
over every singularity.

\begin{lemma}
\label{invariance}
Let $\overline D$ be a bordered Riemann surface with $\cC^1$ boundary 
and $f\colon \overline D\to \C^2$ be a $\cC^1$ map that is an embedding 
near $bD$ and is holomorphic in $D$, with $f(D)\cap f(bD)=\emptyset$. 
Let $\Sigma=f(D)$, let $p_1,\ldots, p_k \in \Sigma$ be all its singular points, 
and let $\{q_1,\ldots, q_l\}=f^{-1}(\{p_1,\ldots,p_k\}) \subset D$.
Then there exists an integer $N\in\N$ with the following property.
For every $\cC^1$ map $f'\colon \overline D\to \C^2$ which is sufficiently
$\cC^1$ close to $f$, holomorphic in $D$ and tangent to $f$ to order $N$
at each of the points $q_1,\ldots,q_l$, the image $\Sigma'=f'(D)$
is biholomorphically equivalent to $\Sigma$.
\end{lemma}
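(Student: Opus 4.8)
The plan is to exhibit an explicit biholomorphism between $\Sigma$ and $\Sigma'$ near each pair of singular points, and to patch these with a near-identity map away from the singularities. First I would fix local holomorphic coordinates on $\C^2$ centered at each singular point $p_i$ of $\Sigma$, and work one singular point at a time; by the hypothesis $f(D)\cap f(bD)=\emptyset$ and the finiteness of the singular set, the analysis is purely local. Let $V=\Sigma\cap B$ be a branch decomposition in a small ball $B$ around $p_i$, with normalization discs $\psi_{j}\colon \D\to V_j$ as in Lemma \ref{normalize}, so that on $D$ near the preimages $q$ of $p_i$ the map $f$ is exactly one of these normalization charts. The key is to compare $f$ with $f'$ branch by branch: since $f'$ is holomorphic on $D$, tangent to $f$ to order $N$ at each $q$, and $\cC^1$-close to $f$, for $N$ large the image $\Sigma'=f'(D)$ is, near $p_i$, a small perturbation of $V$ with the same number of branches, each branch being the image of a disc under a map of the form $z\mapsto \psi_j(z)+O(z^{N})$.

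Next I would construct, on each branch, a biholomorphism $V_j^*\to V_j'^*$ (stars denoting punctured branches) and show it extends across the singular point. The natural candidate is $h=f'\circ f^{-1}$ on the regular locus of $\Sigma$ near $p_i$; this is well defined and biholomorphic where $f$ is a local biholomorphism, i.e. over $\Sigma\setminus\{p_1,\dots,p_k\}$. To see that $h$ extends holomorphically across $p_i$ I would invoke Lemma \ref{weakly-holo}: writing $h$ in the target coordinates as $(h_1,h_2)$, the tangency condition $f'=f+O(\text{dist}^{N})$ at the preimage points translates, after composing with the inverse normalization $\psi_j^{-1}$, into the estimate $|h_m(z)-z_m|\le C|z|^{N/\nu}$ (or a similar polynomial bound) on $V_j^*$, where $\nu$ bounds the order of the branch parametrization. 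Since $h_m-z_m$ is holomorphic and bounded by a power of the distance to the origin on $V^*$, Lemma \ref{weakly-holo} with the appropriate $N$ forces it to extend holomorphically across $0$; hence $h$ extends to a holomorphic map of a neighborhood of $p_i$ in $\Sigma$ onto a neighborhood of $p_i'$ in $\Sigma'$ (the target singular point, which equals $p_i$ to high order). The same argument applied to $f\circ f'^{-1}$ produces the holomorphic inverse, so $h$ is a local biholomorphism $\Sigma\cap B\to\Sigma'\cap B'$ for $B,B'$ small enough.

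Finally I would globalize. Cover $\overline\Sigma$ by the small balls $B_1,\dots,B_k$ around the singular points together with the regular region $\Sigma_{\rm reg}$; on $\Sigma_{\rm reg}$ the map $f'\circ f^{-1}$ is already a biholomorphism onto $\Sigma'_{\rm reg}$, and on each $B_i$ the extended map $h^{(i)}$ agrees with $f'\circ f^{-1}$ on the overlap $B_i\cap\Sigma_{\rm reg}$. Therefore the locally defined biholomorphisms agree on overlaps and glue to a single holomorphic bijection $\Phi\colon\Sigma\to\Sigma'$ with holomorphic inverse, i.e. a biholomorphism; this is exactly the claim. The main obstacle is the extension step across the singularities: one must make the tangency order $N$ in the hypothesis precise enough that, after pulling back through the branch normalizations (which may contract distances by a power $z\mapsto z^\nu$), the resulting growth bound on $f'\circ f^{-1}-\mathrm{id}$ still lies below the threshold exponent supplied by Lemma \ref{weakly-holo}. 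Choosing $N$ larger than the product of that threshold with the maximal branching order over all singular points handles this, and the remaining gluing is routine.
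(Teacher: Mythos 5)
Your proposal is correct and follows essentially the same route as the paper: both define $\Phi=f'\circ f^{-1}$ on the regular locus, translate the order-$N$ tangency at the preimages $q_j$ into a bound $|\Phi(z)-z|=O(|z|^{N\alpha})$ on each punctured local branch (with $\alpha$ reflecting the branching order), invoke Lemma \ref{weakly-holo} to extend across the singular points for $N$ large, and repeat for $\Phi^{-1}$. The only cosmetic difference is that you phrase the last step as gluing local extensions, while the paper simply extends the globally defined $\Phi$ across each singularity.
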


\begin{proof}
The conditions imply that $f$ and $f'$ are injective holomorphic 
embeddings of $D'=D\bs\{q_1,\ldots,q_l\}$ into $\C^2$, and hence
the map
\[
	\Phi= f'\circ f^{-1} \colon \Sigma\bs \{p_1,\ldots, p_k\} 
	\to \Sigma'\bs \{p_1,\ldots, p_k\}
\]
is biholomorphic. It remains to show that $\Phi$ and $\Phi^{-1}$
extend holomorphically across the singular points $p_j$, 
provided that $f$ and $f'$ are tangent to a sufficiently high
order at all points in $f^{-1}(p_j)\subset D$. 

The problem being local, we fix a point $q=q_j \in D$ and 
let $p =f(q)=f'(q)\in \Sigma_{\rm sing}\subset  \C^2$.
In suitable local holomorphic coordinates we have $q=0\in\C$,
$p =0 \in\C^2$, and $f(\zeta)=(f_1(\zeta),f_2(\zeta))$ is an injective 
local holomorphic map with the only branch point at $\zeta=0$. 
Let $V \subset \Sigma$ be the local image of $f$,
so $V$ is a local irreducible complex curve in $\C^2$ whose only 
singular point is the origin $0\in\C^2$. For $z\in V^*=V\bs \{0\}$ let 
$\zeta(z) = f^{-1}(z)$, a holomorphic function 
on $V^*$. We have $|\zeta(z)|\le |z|^\alpha$ for some $\alpha>0$.
Then $\Phi(z)=f'(\zeta(z))$ for $z\in V^*$.
From  $f'(\zeta)=f(\zeta)+O(\zeta^N)$ we get for $z\in V^*$
\begin{equation}
\label{Phi-g}
	\Phi(z)= f(\zeta(z)) + O(\zeta(z)^N) = z + g(z),
\end{equation}
where $g$ is a holomorphic function on $V^*$ satisfying 
\[
	|g(z)|=O(|\zeta(z)|^N)= O(|z|^{N\alpha}),\quad z\to 0.
\]

The same argument applies to every local irreducible component of 
$\Sigma$ at the singular point $p$. If $N>0$ is sufficiently large 
then the function $g$ in (\ref{Phi-g}), which is 
defined and holomorphic on a deleted neighborhood of $p$ in $\Sigma$,  
extends holomorphically across $p$ by Lemma \ref{weakly-holo}.
It follows that $\Phi$ extends holomorphically to $\Sigma$
for all large $N$. The same  argument applies to  $\Phi^{-1}$, so 
$\Phi \colon \Sigma\to\Sigma'$ is biholomorphic.
\end{proof}

\section{Exposing boundary points}
\label{sec:exposing}
In this secton we prove a result on exposing boundary points of 
complex curves in $\C^2$. Theorem \ref{exposing} below is
a main new technical result of this paper.
It also hold in $\C^n$, with essentially the same proof.

We shall need the following notion introduced in \cite{Wold2}.
Let $\pi\colon \C^2\to \C$ denote a $\C$-linear map onto $\C$;
we may assume that $\pi(z_1,z_2)=z_1$. 

%
%
%
%
\begin{definition}
\label{exposed}
Let $\Sigma \subset \C^2$ be a locally closed complex curve,
possibly with boundary. A point $p=(p_1,p_2)\in \Sigma$ is  {\em exposed}
(with respect to the projection $\pi$) if the complex line
\[
    \Lambda_p= \pi^{-1}(\pi(p)) = \{(p_1,\zeta)\colon \zeta\in \C\}
\]
intersects $\Sigma$ precisely $p$ and the intersection
is transverse: $T_p \Lambda_p \cap T_p \Sigma =\{0\}$.
If $\Sigma=f(R)$, where $R$ is a Riemann surface (with or without
boundary) and $f\colon R\to \C^2$ is a holomorphic map, 
then a point $a\in R$ is said to be $f$-exposed
if the point $p=f(a) \in \Sigma$ is exposed.
\qed \end{definition}

%
%
%
%
\begin{theorem}
\label{exposing}
Let $\overline D$ be a bordered Riemann surface with $\cC^r$ boundary 
for some $r>1$. Assume that $f\colon \overline D\to \C^2$ is a $\cC^1$
map which is holomorphic in $D$ and is an embedding near $bD$,
with $f(D)\cap f(bD)=\emptyset$. 
Then $f$ can be approximated, uniformly on compacts in $D$,
by a map $F\colon \overline D \to \C^2$ with the same properties
such that the complex curve $F(D)\subset \C^2$ is biholomorphic to 
the curve $f(D)$,
and such that every boundary curve of $F(\overline D)$ contains an 
exposed point. Furthermore, $F$ can be chosen to agree with $f$ to 
a given finite order at a prescribed finite set of
points $c_1,\ldots,c_l\in D$; if these points are $f$-exposed then
$F$ can be chosen such that they are also $F$-exposed. 
\end{theorem}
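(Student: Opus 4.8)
The strategy is to combine the conformal‐diffeomorphism construction of Section \ref{conformal-diff} with the stability result Lemma \ref{invariance}. First I would use Lemma \ref{normalize} to replace $\overline\Sigma = f(\overline D)$ by its normalization, so that we may assume $\overline D$ is a smoothly bounded relatively compact domain in an open Riemann surface $R$ and $f$ extends holomorphically to a neighborhood of $\overline D$ (via Mergelyan, as in the proof of Corollary \ref{main}); in particular $f$ is an embedding near $bD$. Next, on each boundary curve $C_j$ of $bD$ I would select a point $a_j\in C_j$ and a target point $b_j\in R\setminus\overline D$ in the same component of $R\setminus D$ as $a_j$. The aim is to arrange, after perturbing $f$, that the images $F(b_j)$ become exposed boundary points; but since we are only allowed to move the map $f$ (not push the boundary out in $R$), the correct formulation is: we move the \emph{domain} by a conformal diffeomorphism $\phi\colon\overline D\to\overline D'$ supplied by Theorem \ref{conformal-map}, and then set $F = f\circ\phi^{-1}$ (precomposed with a Mergelyan extension and a small perturbation). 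The spike that $D'$ grows toward each $b_j$ will carry, under $f$, a thin tongue of the curve whose tip we can make exposed.

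The heart of the argument is the choice of the arcs $\gamma_j$ and targets $b_j$ so that the tip of each spike is $f$-exposed. Recall the projection $\pi(z_1,z_2)=z_1$. I would choose the point $b_j$ (equivalently, after $\phi$, the new boundary point $b_j\in bD'$ whose $f$-image is the tip) so that $\pi\circ f$ restricted to a neighborhood of that tip is injective with transverse image — concretely, pick a point $q_j\in R\setminus\overline D$ near $C_j$ where $d(\pi\circ f)_{q_j}\neq 0$ and where the vertical line $\Lambda_{f(q_j)}$ meets $f(\overline D')$ only at $f(q_j)$. Genericity makes this possible: the curve $f(\overline D')$ has real dimension $2$, its $\pi$-image is a compact set, and for all but finitely many fibers the intersection is a finite set of transverse points; by pushing the spike tip slightly outward along a direction transverse to $bD$ and avoiding these bad fibers we land on an exposed value. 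Since Theorem \ref{conformal-map} lets $\phi$ be as $\cC^\infty$-close to the identity as we wish away from small neighborhoods $U_j$ of the $a_j$, the map $F=f\circ\phi^{-1}$ approximates $f$ uniformly on compacts in $D$, as required.

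The remaining points to check are the preservation of the biholomorphism type and the interpolation clause. For the former: Lemma \ref{invariance} gives an integer $N$ such that any holomorphic perturbation of $f$ tangent to order $N$ at the finitely many points $q_1,\dots,q_l$ lying over the interior singularities of $\Sigma=f(D)$ yields a curve $F(D)$ biholomorphic to $\Sigma$. Theorem \ref{conformal-map} precisely allows $\phi$ — hence $F=f\circ\phi^{-1}$ — to be tangent to the identity, resp.\ to $f$, to any prescribed order $N$ at any prescribed finite set of interior points; so we include the preimages $q_i$ of the singularities \emph{and} the prescribed points $c_1,\dots,c_l$ in that finite set, with the order taken to be $\max$ of $N$ and the given finite order. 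This simultaneously guarantees $F(D)\cong f(D)$ and that $F$ agrees with $f$ to the prescribed order at the $c_i$. If a given $c_i$ is $f$-exposed, then since $F$ agrees with $f$ to order $\ge 1$ there (in particular $F(c_i)=f(c_i)$ and $dF_{c_i}=df_{c_i}$), the line $\Lambda_{F(c_i)}=\Lambda_{f(c_i)}$ still meets $F(\overline D)$ transversely at $F(c_i)$; one only needs the perturbation small enough that no \emph{other} intersection point of this finite collection of vertical lines with $f(\overline D)$ is created, which holds for $\phi$ close enough to the identity away from the $U_j$ (and the $U_j$ can be shrunk to avoid the relevant fibers).

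The step I expect to be the main obstacle is the second paragraph: arranging that each spike tip is genuinely exposed \emph{while keeping the perturbation small and not destroying the embedding near $bD$}. One must simultaneously (i) keep $f\circ\phi^{-1}$ an embedding near $bD'$ — this follows from injectivity of $f$ near $bD$ together with $\phi$ close to the identity there, but the newly created boundary arc inside $U_j$ needs separate care, exactly as in the injectivity discussion at the end of the proof of Lemma \ref{main-step}; (ii) choose the direction of the arc $\gamma_j$ so that $\pi\circ f\circ\gamma_j$ is transverse to $\pi\circ f(bD)$ at $a_j$, which is a genuine constraint relating the projection $\pi$, the map $f$, and the geometry of $bD$; and (iii) ensure the tip avoids the finitely many "bad" vertical fibers over the whole (compact) curve, including fibers through the prescribed exposed points $c_i$ that we must preserve. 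All three are handled by general position arguments, but assembling them coherently — and in the correct order relative to the inductive application of Lemma \ref{main-step} boundary curve by boundary curve — is where the real work lies.
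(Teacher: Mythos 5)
Your overall architecture --- Mergelyan extension, Lemma \ref{invariance} to control the biholomorphism type, Theorem \ref{conformal-map} to grow a spike toward a target boundary point on each boundary curve, and high-order interpolation at the $c_i$ and at the preimages of the singularities --- matches the paper's proof. But the step you yourself flag as the main obstacle, making each spike tip exposed, is where your argument genuinely fails. You propose to take the tip to be $f(q_j)$ for a point $q_j\in R\setminus\overline D$ near $C_j$, arguing that ``for all but finitely many fibers the intersection is a finite set of transverse points'' and that one can avoid ``the finitely many bad vertical fibers.'' Exposedness, however, requires the vertical line to meet the \emph{entire} curve in exactly one point, and the fibers violating this are typically not finite in number: $\pi\circ f$ is an open map, so $\pi(f(\overline D))$ is a compact set with nonempty interior, and over a generic point of that interior the fiber meets $f(\overline D)$ in several points (already for $\{w^2=z\}$ over a disc, every fiber off the branch point has two points, so \emph{no} point over that disc is exposed). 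Since $\pi(f(q_j))$ is close to $\pi(f(a_j))\in\pi(f(\overline D))$, the vertical line through $f(q_j)$ will in general hit $f(\overline D)$ elsewhere no matter how you tilt the arc $\gamma_j$; no local general-position argument fixes this.

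The missing idea is that the tip must be sent to a point $p_j\in\C^2$ whose vertical line avoids $f(\overline U)$ \emph{altogether} (such $p_j$ exist because $\pi(f(\overline U))$ is compact), and this forces the connecting arc $\lambda_j$ from $q_j=f(a_j)$ to $p_j$ to be a genuinely new arc in $\C^2$ that leaves the image $f(U)$ --- it cannot be the $f$-image of an arc in $R$. One then defines $f'$ on $U\cup\bigcup_j\gamma_j$ mapping each $\gamma_j$ diffeomorphically onto $\lambda_j$, and applies Mergelyan a \emph{second} time, on the compact set $\overline D\cup\bigcup_j\gamma_j$ (a domain with arcs attached), to get a holomorphic $\wt f$ on a neighborhood $V$ of this set with $\wt f(b_j)=p_j$ exposed; only then does Theorem \ref{conformal-map}, applied inside $V$ with targets $b_j$, and the composition $F=\wt f\circ\phi$ finish the proof. (Your formula $F=f\circ\phi^{-1}$ is also oriented the wrong way: $\phi^{-1}$ is defined on $\overline{D'}$, not $\overline D$, and in any case the map must be the Mergelyan approximant $\wt f$, which is defined on the arcs, precomposed with $\phi$.) The rest of your proposal --- the treatment of the singularities via Lemma \ref{invariance} and the preservation of the $f$-exposed points $c_i$ --- is sound.
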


\begin{proof}
We begin with a few reductions.

The hypotheses imply that $\overline \Sigma = f(\overline D)$ is a compact complex 
curve in $\C^2$ with embedded $\cC^1$ boundary $b\Sigma=f(bD)$
and with finitely many interior singularities.
Let $\{d_1,\ldots, d_s\}=f^{-1}(\Sigma_{\rm sing}) \subset D$.

We realize $\overline D$ as a domain with smooth 
boundary in an open Riemann surface $R$; the 
corresponding biholomorphic map is of class $\cC^1$ up
to the boundary. By Mergelyan's theorem we can find a holomorphic map 
$g\colon U\to \C^2$ from an open neighborhood $U\subset R$ of 
$\overline D$ into $\C^2$ such that $g$ approximates $f$
arbitrarily well in the $\cC^1(\overline D)$ topology, and $g$
agrees with $f$ to a given order at each of the points 
$c_1,\ldots,c_l$, $d_1,\ldots,d_s$.
By Lemma \ref{invariance} we may assume that the complex curve
$g(D)$ is biholomorphic to $f(D)$. Replacing $g$ by $f$ and
$R$ by a sufficiently small open neighborhood of
$\overline D$ in $R$ we may therefore assume that $f \colon R\to \C^2$ is a 
holomorphic map which is an embedding (injective immersion)
on $R\bs \{d_1,\ldots,d_s\}$.

We have $bD=\bigcup_{j=1}^m C_j$, each $C_j$ being a closed curve.
For every $j$ we choose a point $a_j\in  C_j$ and a
smooth embedded arc $\gamma_j\subset R$ that is attached with
one of its endpoints to $\overline D$ at $a_j$, and such that the intersection
of $\gamma_j$ and $C_j$ is transverse at $a_j$.
The rest of the arc, $\gamma_j\bs\{a_j\}$, is contained
in $R\bs \overline D$. Let $b_j$ denote the other endpoint of $\gamma_j$.
Choose an open set $U\subset R$ that contains $\overline D$
and  such that $\overline U$ does not contain any of the points
$b_1,\ldots,b_m$. We also insure that the set
$\gamma_j\cap U=\wt \gamma_j$ is an arc with an endpoint $a_j$.

In $\C^2$ we choose for every $j=1,\ldots,m$ a smooth embedded arc
$\lambda_j$ that agrees with the arc $f(\wt \gamma_j)$ near the
endpoint $q_j= f(a_j)$, while the rest of it, $\lambda_j \bs f(\wt
\gamma_j)$, does not intersect $f(U)$. We also insure that the
arcs $\lambda_1,\ldots,\lambda_m$ are pairwise disjoint, 
they do not intersect any of the vertical complex lines through 
the points $f(c_1),\ldots,f(c_l)$, and 
the other endpoint $p_j$ of $\lambda_j$ is an exposed point
for the set $f(\overline D)\cup (\bigcup_{j=1}^m \lambda_j) \subset \C^2$
(see Figure \ref{Fig3}). In particular, the complexified tangent
line to the arc $\lambda_j$ at $p_j$ is transverse to the vertical
line through $p_j$. We may begin with an arbitrary
set of points $p_1,\ldots,p_m\in\C^2$ such that the vertical lines
through them are pairwise disjoint and do not intersect $f(\overline U)$,
and then find arcs $\lambda_j$ from $q_j=f(a_j)$ to $p_j$ as above.

%
%
%
%
%

\begin{figure}[ht]
\psset{unit=0.6cm, linewidth=0.7pt}

\begin{pspicture}(-6,-5)(8,5)
%
%
\psarc[linewidth=1.5pt,linecolor=red](0,0){4.5}{80}{280}

%
%
\psellipse[linewidth=1.5pt,linecolor=red](0.75,3.36)(0.3,1.1)
\psellipse[linewidth=1.5pt,linecolor=red](0.75,-3.36)(0.3,1.1)
\psellipse[linewidth=1.5pt,linecolor=red](0.75,0)(0.3,1.1)
%
%
\psarc[linewidth=1.5pt,linecolor=red](0.75,1.68){0.6}{90}{270}
\psarc[linewidth=1.5pt,linecolor=red](0.75,-1.68){0.6}{90}{270}

%
%
\psarc[linewidth=1.5pt,linecolor=red](-0.7,1.6){1.8}{140}{220}
\psarc[linewidth=1.5pt,linecolor=red](-0.7,-1.6){1.8}{140}{220}
\psarc[linewidth=1.5pt,linecolor=red](-4,1.6){2}{-27}{27}
\psarc[linewidth=1.5pt,linecolor=red](-4,-1.6){2}{-27}{27}

%
%
\psdot[dotsize=4pt](1,3.4)
\psdot[dotsize=4pt](3,3.4)
\pscurve(1,3.4)(1.5,3.6)(2,3.4)(2.5,3.2)(3,3.4)
\psline[linestyle=dotted](3,-4.8)(3,4.8)
\rput(1.4,3.1){$q_1$}
\rput(3.4,3.1){$p_1$}
\rput(2.1,3.8){$\lambda_1$}

%
%
\psdot[dotsize=4pt](1,0)
\psdot[dotsize=4pt](6,0)
\pscurve(1,0)(2,-0.3)(3,0)(4,0.3)(5,0.1)(6,0)
\psline[linestyle=dotted](6,-4.8)(6,4.8)
\rput(1.4,0.3){$q_2$}
\rput(6.4,0.3){$p_2$}
\rput(3.8,0.7){$\lambda_2$}

%
%
\psdot[dotsize=4pt](1,-3.5)
\psdot[dotsize=4pt](4.5,-3.5)
\pscurve(1,-3.5)(2,-3.3)(3,-3.5)(3.7,-3.7)(4.5,-3.5)
\psline[linestyle=dotted](4.5,-4.8)(4.5,4.8)

\rput(1.4,-3.8){$q_3$}
\rput(4.9,-3.8){$p_3$}
\rput(2.1,-2.8){$\lambda_3$}

\end{pspicture}
\caption{A Riemann surface with exposed tails}
\label{Fig3}
\end{figure}
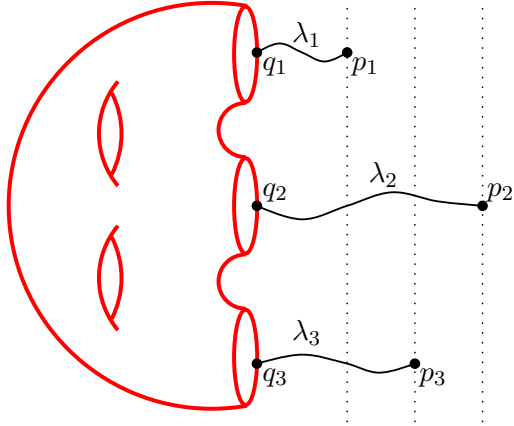

Let $K=\overline D\cup (\bigcup_{j=1}^m \gamma_j)$, a compact set in 
the Riemann surface $R$. 
Let $f'\colon U\cup (\bigcup_{j=1}^m \gamma_j)\to\C^2$ be a smooth map that
agrees with $f$ on $U$ and that maps each arc $\gamma_j\subset R$
diffeomorphically onto the corresponding arc 
$\lambda_j \subset\C^2$. In particular, the endpoint
$b_j$ of $\gamma_j$ is mapped by $f'$ to the exposed endpoint
$p_j$ of $\lambda_j$.

By Mergelyan's theorem (see e.g.\ \cite[Theorem 3.2]{FF:submersions}) we can 
approximate $f'$, uniformly on a neighborhood of $\overline D$ in $R$
and in the $\cC^1$ topology on each of the arcs $\gamma_j$, by
a holomorphic map $\wt f\colon V\to \C^2$ from an open neighborhood
of $K$ in $R$. At the same time we insure that $\wt f$ agrees with $f'$
to a high order at each of the points 
$a_1,\ldots, a_m$, $b_1,\ldots,b_m$, $c_1,\ldots,c_l$, $d_1,\ldots,d_s$. 
If the approximation is close enough, the neighborhood $V\supset K$ 
is chosen small enough, and the interpolation at the indicated
points is to a sufficiently high order, then 
$\wt f\colon V\to \C^2$ is a (non-proper) holomorphic embedding 
except at the points $d_1,\ldots, d_s$, the complex 
curve $\wt \Sigma = \wt f(D) \subset \C^2$ is biholomorphic to 
the curve $\Sigma = f(D)$ according to Lemma \ref{invariance}, and the points 
$p_j=\wt f(b_j)$ and $\wt f(c_j)=f(c_j)$ are exposed in $\wt f(V)$.

Now Theorem \ref{conformal-map} furnishes a diffeomorphism
$\phi\colon \overline D\to \phi(\overline D) \subset V$
that is holomorphic in $D$,  that sends the point $a_j\in bD$ to 
the point $b_j$ for every $j=1,\ldots,m$, that is tangent to the identity
to a desired (high) order at each of the points $c_1,\ldots,c_l$,
$d_1,\ldots, d_s$, and that is close to the identity map outside a 
small neighborhood of $\{a_1,\ldots,a_m\}$. 
The composition
\[
    F=\wt f\circ\phi\colon \overline D \to \C^2
\]
then maps $\overline D$ onto the domain $F(\overline D)$
in the complex curve $\wt f(V) \subset \C^2$
such that each point $p_j=F(a_j)$ for $j=1,\ldots,m$
is an exposed boundary point of $F(\overline D)$, 
and the points $F(c_j)=f(c_j)$ are also exposed
in $F(\overline D)$.

Let $\Sigma'= F(D)$. Note that $\phi$ induces a biholomorphic map
\[
	\wt \phi =  F \circ (\wt f\,)^{-1} 
	     =  \wt f \circ\phi\circ  (\wt f\,)^{-1} 
	        \colon \wt \Sigma_{\rm reg} \to \Sigma'_{\rm reg}.
\]
If $\phi$ is chosen tangent to the identity map to a 
sufficiently high order at each of the points $d_1,\ldots, d_s$,
then $\wt \phi$ is tangent to the identity to a high order
at each of the points in $\wt \Sigma_{\rm sing}$,
and hence Lemma \ref{invariance} shows that $\wt \phi$ extends to 
a biholomorphic map $\Phi\colon \wt \Sigma \to \Sigma'$. 
Thus $\Sigma'=F(D)$ is biholomorphic to $\wt \Sigma$,
and hence to $\Sigma = f(D)$. 
\end{proof}

\section{Proofs of main results}
\label{proof}
In this section we prove Theorem \ref{curves} and obtain some 
further corollaries.

By Theorem \ref{exposing} in \S \ref{sec:exposing} we may
assume that the complex curve $\overline \Sigma$ in Theorem \ref{curves}
admits an exposed point in each of its boundary curves.
To complete the proof of Theorem \ref{curves} it 
therefore suffices to show the following.

%
%
%
%
\begin{theorem}
\label{Wold}
Let $\overline\Sigma \subset \C^2$ be as in Theorem \ref{curves}.
If every boundary component of $\Sigma$ contains an exposed point
(see Def.\ \ref{exposed}) then the conclusion of Theorem \ref{curves} holds. 
\end{theorem}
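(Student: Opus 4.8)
The plan is to follow the method of Wold \cite{Wold2}, now made available by the preceding sections which guarantee an exposed point in each boundary curve. The construction proceeds in two stages: first use rational shears to blow the exposed points off to infinity, obtaining an unbounded embedded curve whose boundary consists of properly embedded arcs, and then apply a sequence of holomorphic automorphisms of $\C^2$ to push that boundary to infinity while keeping a prescribed compact part of the curve under control, so that the automorphisms converge to a Fatou--Bieberbach map on a domain $\Omega \supset \Sigma$ with $b\Sigma \subset b\Omega$.

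First I would set up the shear. Let $p_1,\dots,p_m$ be the exposed boundary points, one in each boundary curve $C_j$, with $p_j = (p_j^1, p_j^2)$ and the vertical lines $\{p_j^1\}\times\C$ meeting $\overline\Sigma$ only at $p_j$, transversely. Choose a rational function $h(z_1)$ with simple poles exactly at $z_1 = p_j^1$ (and no other poles) and apply the shear $g(z_1,z_2) = (z_1, z_2 + h(z_1))$. Because each $p_j$ is exposed, $g$ is a biholomorphism of $\C^2 \setminus \bigcup_j(\{p_j^1\}\times\C)$ onto itself carrying $\overline\Sigma \setminus \{p_1,\dots,p_m\}$ to an embedded complex curve $\overline X = g(\overline\Sigma \setminus\{p_1,\dots,p_m\})$ whose closure in $\C^2$ is all of $\overline X$ and whose boundary $bX = \bigcup_j \lambda_j$, $\lambda_j = g(C_j\setminus\{p_j\})$, consists of $m$ pairwise disjoint $\cC^r$ arcs each of which is properly embedded in $\C^2$ (each goes to infinity as one approaches the removed exposed point, since $h$ has a simple pole there and the intersection is transverse). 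The interior singularities of $\Sigma$ are unaffected, so $X$ is still biholomorphic to $\Sigma$, and the inclusion $X \hookrightarrow \C^2$ approximates (via $g$) the desired data on compacts. One must check the arcs can be arranged to avoid each other and to escape to infinity cleanly; transversality of the exposure gives the leading asymptotics.

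Next comes the automorphism stage, which is the main obstacle. Following \cite{Wold2} and the techniques of \cite{AL,BF,FR}, I would construct inductively a sequence $\Phi_n \in \Aut(\C^2)$ such that $\Phi_{n+1}$ is very close to $\Phi_n$ on a large ball $B_n$ (so the sequence converges uniformly on compacts of an open set $\Omega$) while $\Phi_n$ pushes the part of $bX$ lying outside $B_n$ far away, ensuring that no point of $bX$ lies in $\Omega = \bigcup_n \Phi_n^{-1}(B_n)$ and that $X \subset \Omega$. The key analytic input is that the arcs $\lambda_j$ are properly embedded and polynomially convex-compatible with the construction, allowing the use of Andersén--Lempert theory to approximate, on neighborhoods of the relevant compacts together with tails of the arcs, the maps that translate the tails outward by automorphisms; the exhaustion is arranged so the limit $\Phi = \lim_n \Phi_n \colon \Omega \to \C^2$ is a Fatou--Bieberbach map with $X \subset \Omega$ and $bX \subset b\Omega$. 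The hard part is the careful bookkeeping that keeps $X$ inside the basin while pushing its boundary out: one needs the sets being approximated over at each stage to be polynomially convex (or to have the appropriate approximation property) and to control the accumulated error so that the limit domain neither swallows a boundary point nor fails to contain $X$.

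Finally, the restriction $\Phi|_X \colon X \hookrightarrow \C^2$ is a proper holomorphic embedding: it is injective since $\Phi$ is injective on $\Omega \supset X$ and $X$ has no self-intersections (having started from the embedded curve $\overline\Sigma$ with its interior singularities removed — and in fact $X$ is biholomorphic to $\Sigma$, the normalization, so "embedding" here means the composite $\Sigma \cong X \hookrightarrow \C^2$); it is proper because any sequence in $X$ leaving every compact subset of $X$ either converges to a boundary point of $X$ (hence $\Phi$ of it leaves every compact set, as $bX \subset b\Omega$) or already leaves every compact set of $\C^2$. Composing back through $g$ and through the biholomorphism $\Sigma \cong X$, and using that all approximations were taken uniformly on compacts with the prescribed interpolation preserved, yields a proper holomorphic embedding $\Sigma \hookrightarrow \C^2$ arbitrarily close to $\iota$ on compacts, which is exactly the conclusion of Theorem \ref{curves}. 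This completes the proof of Theorem \ref{Wold}, and with it Theorem \ref{curves}.
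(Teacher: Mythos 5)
Your proposal follows essentially the same route as the paper: normalize the curve, blow the exposed boundary points off to infinity with a rational shear, and then push the resulting unbounded boundary arcs to infinity by an inductively constructed sequence of automorphisms converging to a Fatou--Bieberbach map $\Phi\colon\Omega\to\C^2$ with $X\subset\Omega$ and $bX\subset b\Omega$. The only substantive point you gloss over that the paper makes explicit is the choice of the residues $\alpha_j$ of the shear so that the projections $\pi_2(\lambda_j)$ are unbounded plane curves for which $\rho\overline\D\cup\bigcup_j\pi_2(\lambda_j)$ has no bounded complementary components --- the geometric hypothesis under which Wold's automorphism lemma applies --- together with the Stolzenberg-type verification that $X$ is exhausted by polynomially convex compacts.
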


\begin{proof}
In the special case when $\Sigma$ has no interior singularities, 
Theorem \ref{Wold} is due to the second author 
(see \cite[Theorem 1]{Wold2}). We shall now show that the proof 
given there also holds for curves with singularities.

Lemma \ref{normalize} furnishes a smoothly bounded domain
$D$ in a Riemann surface $R$ and a $\cC^r$ map
$f \colon \overline D\to \overline \Sigma$ such that
$f(\overline D)=\overline \Sigma$, $f(bD) = b\Sigma$,   
$f$ is diffeomorphic near $bD$, and $f\colon D\to \Sigma$ 
is a holomorphic normalization of $\Sigma$.

Let $bD=\bigcup_{j=1}^m C_j$, and assume that $a_j \in C_j$
is an $f$-exposed point for each $j=1,\ldots,m$
(with respect to the first projection
$\pi_1(z,w)=z$). Let $\pi_2 \colon \C^2\to \C$ be the second 
projection $\pi_2(z,w)=w$. Define a rational shear map $g$ of $\C^2$ by
\begin{equation}
\label{function-g}
    g(z,w)= \bigl( z,w+\sum_{j=1}^m \frac{\alpha_j}{z-\pi(f(a_j))} \bigr).
\end{equation}
The numbers $\alpha_j\in \C\bs\{0\}$ can be chosen such that
$\pi_2$ maps the (unbounded) curves 
\[
	\lambda_j = (g\circ f)(C_j \bs \{a_j\}) \subset\C^2
\]
to unbounded curves $\gamma_j=\pi_2(\lambda_j) \subset \C$,
and $\pi_2\colon \lambda_j\to\gamma_j$ is a diffeomorphism 
near infinity. Furthermore, for every sufficiently large number 
$\rho >0$, the set 
$
	\rho \overline \D \cup \bigcup_{j=1}^m \gamma_j\subset \C 
$
has no bounded complementary connected components.
This is achieved by a careful choice of the arguments of 
$\alpha_j$'s, while their absolute values $|\alpha_j|$ 
can be taken as small as desired.  

Consider the  map 
$g\circ f\colon \overline  D\bs \{a_j\}_{j=1}^m\to \C^2$.
Fix a compact set $L$ in $D$.
By choosing the numbers $\alpha_j$ small enough we insure that 
$g\circ f$ is close to $f$ on $L$.  
The complex curve $X=(g\circ f)(D) \subset \C^2$, with boundary
\[
	bX= (g\circ f)(bD\bs  \{a_j\}_{j=1}^m) =\bigcup_{j=1}^m \lambda_j, 
\]
is then biholomorphic to $\Sigma=f(D)$, and it enjoys the 
following properties:

\smallskip
(1) $X$ admits an exhaustion
$K_1\subset K_2\subset\cdots \subset \bigcup_{j=1}^\infty K_j = X$
by compact sets $K_j$ that are polynomially convex in $\C^2$,
with $(g\circ f)(L)\subset K_1$.

To see this, it suffices to show that any smoothly bounded
compact set $K\subset X$ that is holomorphically convex in $X$
is also polynomially convex in $\C^2$.
Since $\widehat K=\widehat{bK}$ and $b K$
is a union of smooth curves, the set $A= \widehat K\bs  b K$
is an analytic subvariety of $\C^2\bs  bK$
containing $K\bs bK$ (see \cite{Stolz}).
If $A\ne K\bs bK$, then $A$ contains a local extension of $K$
in $X$ near a boundary component of $K$. Hence
$\widehat K$ contains at least one connected component of $X\bs K$,
a contradiction since each of these components is unbounded in $\C^2$.
Thus $\widehat K=K$ as claimed. 

(2) A similar argument shows that for any compact polynomially convex
set $K \subset \C^2\bs bX$, $K\cup K_j$ is also polynomially
convex for all large $j\in\N$. 

(3) For every compact polynomially convex set $K$ contained in 
$\C^2\bs bX$ and for every pair of numbers $\epsilon>0$ (small) 
and $R>0$ (large) there exists a holomorphic automorphism $\phi$ of
$\C^2$ such that 
\[
	\sup_{x\in K} |\phi(x)-x| <\epsilon \ \ 
	{\rm and} \ \ \phi(bX) \subset \C^2\bs R\B. 
\]
(Here $\B$ is the unit ball in $\C^2$.) 
This property of $X$ is invariant under 
holomorphic automorphisms of $\C^2$ as is seen by a 
conjugation argument.

The construction of such $\phi$ can be found in \cite{Wold1} 
(see Lemma 1 and the proof of Theorem 4 in \cite{Wold1}); the main point 
to note here is that the construction depends on the geometric assumptions on the 
curves $\lambda_j$ - it has nothing to do with whether or not $X$ is smooth.

\smallskip

Using properties (1)--(3) we find a sequence of holomorphic automorphisms 
$\Phi_j=\phi_j\circ\phi_{j-1}\circ \cdots\circ\phi_1\in\Aut\C^2$
($j=1,2,\ldots$) carrying $bX$ to infinity and converging on $X$ 
to a proper holomorphic embedding $X\hra \C^2$. 
The inductive step is the following.
Fix $j\in\N$ and assume inductively that 
$\Phi_j(bX) \cap j\overline\B =\emptyset$. 
(This trivially holds for $j=0$ with $\Phi_0=\mathrm{id}$.)
Choose $m_j\in \N$ large enough such that the compact set 
$L_j= j\overline\B \cup \Phi_j(K_{m_j})$ is polynomially convex 
(this is possible by property (2)). 
By property (3) there is for any $\epsilon_j >0$ an automorphism  
$\phi_{j+1} \in\Aut \C^2$ such that
\begin{itemize}
\item $|\phi_{j+1}(x)-x|<\epsilon_j$ for all $x\in L_j$, and
\item $|\phi_{j+1}(x)| > j+1$ for all $x\in \Phi_j(bX)$.
\end{itemize}
Setting $\Phi_{j+1}=\phi_{j+1}\circ \Phi_j$ completes the induction step.

Suitable choices of the sequences $\epsilon_j\searrow 0$ 
and $m_j\nearrow +\infty$ insure that the sequence $\Phi_j\in\Aut\C^2$ converges 
locally uniformly on the domain 
\[
	\Omega =\bigcup_{j=1}^{\infty} \Phi_j^{-1}(j\overline \B)\subset\C^2
\] 
to a biholomorphic map $\Phi\colon\Omega\to\C^2$ onto $\C^2$ (a Fatou-Bieberbach map),
and we have $X\subset \Omega$ and $bX\subset b\Omega$
(see \cite[Proposition 5.1]{F1999}).
The restriction $\varphi=\Phi|_X\colon X\hra \C^2$ is then 
a proper holomorphic embedding of $X$ into $\C^2$.  
Since $X$ is biholomorphic to $\Sigma=f(D)$,  this proves Theorem \ref{Wold}. 
\end{proof}

\begin{proof}[Proof of Corollary \ref{cor1}]
Let $\{a_j\}\subset D$ and $\{b_j\}\subset \C^2$  be discrete sequences
without repetition. If $f\colon \overline D\hra\C^2$ is a holomorphic embedding
such that each boundary component of $D$ admits an $f$-exposed point,
it was proved in \cite[Theorem 3]{KLW} that there is a proper holomorphic
embedding $\varphi\colon D\to\C^2$ such that $\varphi(a_j)=b_j$
for $j=1,2,\ldots$.  By Theorem \ref{exposing} such
an embedding $f\colon \overline D\hra\C^2$ with exposed 
boundary points exists for every Riemann surface $D$ 
satisfying the hypothesis of Corollary \ref{main}. 
\end{proof}

We also have the following embedding result for certain
bordered Riemann surfaces with punctures.

\begin{theorem}
\label{puncture}
Asume that $f\colon\overline D\to \C^2$ is as in Corollary \ref{main},
$\pi\colon \C^2 \to \C$ is a $\C$-linear projection,
$b_1,...,b_k\in\C$, and 
\[
	\{c_1,\ldots,c_l\}= (\pi\circ f)^{-1}(\{b_1,\ldots,b_k\}) \subset D.
\]
Then $D\bs \{c_1,\ldots, c_l\}$ embeds properly holomorphically in $\C^2$.
\end{theorem}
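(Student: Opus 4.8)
The plan is to reduce Theorem~\ref{puncture} to Theorem~\ref{Wold} (equivalently, to Theorem~\ref{curves}) by arranging that the punctures $c_1,\ldots,c_l$ become preimages of points on the line at infinity. First I would use the same normalization as in Corollary~\ref{main}: realize $\overline D$ as a smoothly bounded relatively compact domain in an open Riemann surface $R$, and by Mergelyan's theorem replace $f$ by a holomorphic embedding $\widetilde f\colon U\to\C^2$ of a neighborhood $U\supset\overline D$ onto a locally closed embedded complex curve $A=\widetilde f(U)\subset\C^2$ without singularities, interpolating $f$ to first order at each $c_j$ so that $\pi\circ\widetilde f(c_j)=b_j$ still holds. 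After a $\C$-linear change of coordinates we may assume $\pi=\pi_1$ is the first projection.

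Next I would apply Theorem~\ref{exposing} to $\widetilde f$ on the bordered surface $\overline D$, with the finite set $\{c_1,\ldots,c_l\}$ as the prescribed points at which the new map $F$ must agree with $\widetilde f$ to first order; by the ``furthermore'' clause of Theorem~\ref{exposing}, after pre-exposing the $c_j$ (choosing the initial arcs so the $c_j$ are $\widetilde f$-exposed) we obtain $F$ with every boundary curve of $F(\overline D)$ containing an exposed point and with the $c_j$ also $F$-exposed, while $F(D)$ is biholomorphic to $f(D)=:\Sigma$. Now I would run Wold's scheme exactly as in the proof of Theorem~\ref{Wold}, but apply the shear $g$ of \eqref{function-g} to the \emph{union} of the exposed boundary points $F(a_j)$ \emph{and} the exposed interior points $F(c_j)$: that is, $g(z,w)=\bigl(z,\,w+\sum_j\frac{\alpha_j}{z-\pi(F(a_j))}+\sum_i\frac{\beta_i}{z-b_i}\bigr)$ with all $\alpha_j,\beta_i$ small and with arguments chosen so that the images of the boundary curves $C_j\setminus\{a_j\}$ and of punctured neighborhoods of the $c_i$ are unbounded curves (resp.\ unbounded tails) over which $\pi_2$ is a diffeomorphism near infinity, and so that $\rho\overline\D$ together with the union of these curves has no bounded complementary components. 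Because each $c_i$ is $F$-exposed with $\pi(F(c_i))=b_i$, the shear $g$ has a genuine pole at $z=b_i$ transverse to $F(D)$ there, so $g\circ F$ maps a punctured neighborhood of $c_i$ properly to a tail going to infinity in $\C^2$, and $g\circ F$ is a holomorphic embedding of $D\setminus\{c_1,\ldots,c_l\}$ onto a complex curve $X\subset\C^2$ whose boundary $bX$ is again a finite union of embedded arcs escaping to infinity in the required geometric position.

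From this point the argument is verbatim that of Theorem~\ref{Wold}: properties (1)--(3) (polynomial convexity of an exhaustion of $X$, stability of polynomial convexity under adding a fixed polynomially convex compact, and the automorphism lemma from \cite{Wold1} pushing $bX$ to infinity) depend only on the geometry of the escaping arcs, not on whether $X$ is smooth or on how many punctures were used; so one builds a sequence $\Phi_j\in\Aut\,\C^2$ converging to a Fatou--Bieberbach map $\Phi\colon\Omega\to\C^2$ with $X\subset\Omega$, $bX\subset b\Omega$, and $\Phi|_X$ a proper holomorphic embedding. Since $X$ is biholomorphic to $F(D)\setminus\{F(c_i)\}$, which is biholomorphic to $\Sigma\setminus\{\text{$l$ points}\}\cong D\setminus\{c_1,\ldots,c_l\}$, this gives the desired proper holomorphic embedding of $D\setminus\{c_1,\ldots,c_l\}$ into $\C^2$.

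The main obstacle I anticipate is purely bookkeeping in the shear step: one must choose the arguments of the $\beta_i$ (and coordinate the two families of poles, at $\pi(F(a_j))$ and at $b_i$, which may a priori coincide or be interleaved) so that \emph{all} the resulting unbounded curves and tails simultaneously satisfy the ``no bounded complementary components'' and ``diffeomorphic onto their projections near infinity'' conditions; this is exactly the kind of careful-choice-of-arguments argument already invoked in the proof of Theorem~\ref{Wold}, and it goes through provided we first perturb so that the finitely many points $\pi(F(a_j)),b_1,\ldots,b_k$ are distinct — which a generic small perturbation of $F$ (still compatible with Theorem~\ref{exposing}) achieves. Everything else is an application of results already established.
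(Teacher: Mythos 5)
Your overall route is the same as the paper's: expose a boundary point in each boundary component via Theorem \ref{exposing} while interpolating at the $c_j$, apply a shear with simple poles both at the points $\pi(F(a_j))$ and at $b_1,\ldots,b_k$, and then run the push-to-infinity scheme of Theorem \ref{Wold} verbatim. The one step that does not survive scrutiny is your claim that the $c_j$ can be made $F$-exposed. The hypothesis of Theorem \ref{puncture} is only that $(\pi\circ f)^{-1}(\{b_1,\ldots,b_k\})=\{c_1,\ldots,c_l\}$; it allows several $c_j$ to lie over the same $b_i$, and allows $f(\overline D)$ to meet $\pi^{-1}(b_i)$ tangentially, in which case no $c_j$ over $b_i$ is exposed and no perturbation preserving $\pi(F(c_j))=b_i$ can make it so. Moreover, the ``furthermore'' clause of Theorem \ref{exposing} only \emph{preserves} exposedness of points that are already $f$-exposed; it does not create it, and ``choosing the initial arcs'' cannot help, since those arcs are attached at boundary points and have no bearing on whether an interior point is exposed. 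This is precisely the distinction between Theorem \ref{puncture} and Corollary \ref{cor-punctures}, where exposedness of the $c_j$ is an explicit hypothesis.

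Fortunately, exposedness of the $c_j$ is not needed, and once you delete that claim your argument coincides with the paper's. What is actually required is the weaker property that the construction of Theorem \ref{exposing} does provide: the arcs $\lambda_j$ can be chosen to avoid the lines $\pi^{-1}(b_i)$ and the approximating maps interpolate at the $c_j$, so one arranges $(\pi\circ F)^{-1}(\{b_1,\ldots,b_k\})=\{c_1,\ldots,c_l\}$ with no new intersections. That alone guarantees that the shear $g$ is defined and injective on $F(\overline D)\setminus\{F(a_1),\ldots,F(a_m),F(c_1),\ldots,F(c_l)\}$, and that $g\circ F$ blows each puncture out to an unbounded end simply because $\pi\circ F-b_i$ vanishes at $c_i$ while the remaining terms of $g$ stay bounded there; neither transversality nor single-point intersection of $\pi^{-1}(b_i)$ with $F(\overline D)$ plays any role. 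Your concluding bookkeeping remarks are fine: the pole locations are automatically distinct, since the vertical line through an exposed boundary point $F(a_j)$ meets $F(\overline D)$ only at $F(a_j)$, hence $\pi(F(a_j))\ne b_i$ for every $i$.
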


\begin{proof}
By a linear change of coordinates on $\C^2$ we may assume that 
$\pi$ is the first coordinate projection.
Theorem \ref{exposing} furnishes a new embedding
$F\colon \overline D \hra \C^2$ with an exposed point $a_j\in bD$ 
in each boundary component, taking care to insure that
$F(c_j)=f(c_j)$ for $j=1,\ldots,l$. The construction also
shows that we can avoid creating any new intersections
of $F(\overline D)$ with the finitely many complex lines
$\pi^{-1}(b_j)$ for $j=1,\ldots,k$, so that we have 
\[
	(\pi\circ F)^{-1}(\{b_1,\ldots,b_k\}) = \{c_1,\ldots, c_l\} \subset D.
\]
Let $g$ be a shear (\ref{function-g}) with simple poles at all points
$(\pi\circ F)(a_j)$ $(j=1,\ldots,m)$ and 
$b_1,\ldots, b_k$. Then $g\circ F$ embeds the punctured 
domain $D'=D\bs \{c_1,\ldots,c_l\}$ onto a complex curve $X\subset \C^2$.
The rest of the proof (pushing $bX$ to infinity) is exactly as 
in the proof of Theorem \ref{Wold}. 
\end{proof}

%
%
%
%
\begin{corollary}
\label{cor-punctures}
Assume that the embedding $f \colon \overline D\hra\C^2$
satisfies the hypotheses of Corollary \ref{main}.
If $c_1,\ldots, c_l\in D$ are $f$-exposed points
(with respect to some linear projection $\pi\colon\C^2\to\C$),
then the domain $D'=D\bs \{c_1,\ldots,c_l\}$ admits  
a proper holomorphic embedding in $\C^2$.
\end{corollary}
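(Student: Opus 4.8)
The plan is to deduce Corollary~\ref{cor-punctures} directly from Theorem~\ref{puncture}. Set $b_j := \pi(f(c_j)) \in \C$ for $j = 1,\ldots,l$, and apply Theorem~\ref{puncture} to $f$, $\pi$, and the points $b_1,\ldots,b_l$ (so $k = l$ in the notation there).

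The only point to check is that this choice of the $b_j$ meets the hypothesis of Theorem~\ref{puncture}, namely that $(\pi\circ f)^{-1}(\{b_1,\ldots,b_l\})$ equals $\{c_1,\ldots,c_l\}$ and is contained in $D$; here the $f$-exposedness of the $c_j$ does all the work. By definition, for each $j$ the complex line $\pi^{-1}(b_j) = \Lambda_{f(c_j)}$ meets the curve $f(\overline D)$ only in the single point $f(c_j)$, and transversely. Hence the values $b_1,\ldots,b_l$ are pairwise distinct (were $b_i = b_j$ with $i\neq j$, the line $\Lambda_{f(c_i)}$ would contain the two distinct points $f(c_i)$ and $f(c_j)$ of $f(\overline D)$), and $(\pi\circ f)^{-1}(\{b_1,\ldots,b_l\}) = \{c_1,\ldots,c_l\} \subset D$, with no boundary point of $D$ lying over any $b_j$. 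Theorem~\ref{puncture} therefore furnishes a proper holomorphic embedding of $D' = D\bs\{c_1,\ldots,c_l\}$ into $\C^2$, which is the assertion.

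I expect no genuine difficulty: the substance is already packaged in Theorem~\ref{puncture} --- and through it in Theorem~\ref{Wold} and Theorem~\ref{exposing}, the latter being what allows one to expose a boundary point in each boundary component of $D$ while keeping the interior relations $F(c_j) = f(c_j)$ intact and avoiding new intersections of $F(\overline D)$ with the lines $\pi^{-1}(b_j)$. All that Corollary~\ref{cor-punctures} adds is the elementary observation above that exposedness makes the fibre $(\pi\circ f)^{-1}(b_j)$ consist of exactly the prescribed puncture $c_j$ and stay off $bD$, so that Theorem~\ref{puncture} applies with no further work.
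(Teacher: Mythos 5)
Your deduction is correct and is exactly the argument the paper intends: the corollary is stated as an immediate consequence of Theorem~\ref{puncture}, obtained by taking $b_j=\pi(f(c_j))$ and noting that $f$-exposedness forces $(\pi\circ f)^{-1}(\{b_1,\ldots,b_l\})=\{c_1,\ldots,c_l\}\subset D$. Nothing further is needed.
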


In particular, every finitely connected planar domain
with finitely many punctures embeds properly in $\C^2$,
a result first proved by Wold \cite{Wold1}
(for the punctured disc see also Alexander \cite{Alexander}
and Globevnik \cite{Gl}).

%
%
%
%
%
\section{Teichm\"uller spaces of bordered Riemann Surfaces.}
\label{Teichmuller} In this section we outline another possible
proof of Corollary \ref{main} by employing the theory of
Teichm\"uller spaces. Although not nearly as explicit as
our main proof, it sheds additional light on the subject.
The main idea was already used by Globevnik and Stens\o nes (see \cite{GS}) 
for planar domains (genus $g=0$), and by the second author (see \cite{Wold3}) 
for domains in complex tori (genus $g=1$). Here we focus on domains of
genus $g>1$.

Let $R$ be a connected, closed, oriented smooth surface of genus $g>1$.
The set of all equivalence classes of complex structures
on $R$ is the quotient $T_g/\varGamma_g$, where $T_g$
is the Teichm\"uller space of $R$ (a complex manifold
of complex dimension $3g-3$ that is biholomorphic to a
bounded domain in $\C^{3g-3}$ and is homeomorphic to the ball),
and $\varGamma_g$ is a properly discontinuous group of holomorphic
automorphisms of $T_g$. (For a precise description and the
construction of the Teichm\"uller space $T_g$
see \cite{Ahlfors0,Ahlfors1,Ahlfors2,Bers1,Bers2,Bers3}
and the monographs \cite{Lehto,LV}.)
Each element of $T_g$ can be represented uniquely as
the quotient $\D/G$ of the unit disc $\D\subset \C$ by a
suitably normalized {\em Fuchsian group} $G$, that is, 
a group of fractional linear transformations preserving
the circle $b\,\D$ and acting properly discontinuously
and without fixed points on both discs forming the complement
of $b\,\D$ in the Riemann sphere $\P^1=\C\cup\{\infty\}$. 
By fixing a marked reference surface
$R_0=\D/G_0 \in T_g$, we may view $T_g$ as the
space of group isomorphisms $\theta\colon G_0\to G$
of normalized Fuchsian groups, with the coefficients of
the generators of $\theta(G_0)$ serving as the coordinates
(see \cite[Theorem 2]{Bers2}).

There exists a holomorphic submersion
$\pi\colon Z\to T_g$ of a complex manifold $Z$ onto the Teichm\"uller space
$T_g$ such that the fiber $\pi^{-1}(\theta)$ over any point
$\theta\in T_g$ is the Riemann surface $R_\theta=\D/\theta(G_0)$;
hence $Z$ is a {\em universal family of closed Riemann surfaces
of genus $g$}. One takes $Z$ as the quotient of
$X=T_{g}\times\D$ obtained by replacing each fiber
$\{\theta\}\times \D \subset X$
by the Riemann surface $\D/\theta(G_0)$.
Ahlfors showed  that, in the complex structure
on $X$, the maps $(\theta,z) \mapsto\theta$ and
$(\theta,z)\mapsto (\theta,\theta(a) z)$ are holomorphic for
a fixed $a\in G_0$ (see \cite{Ahlfors2}), and this gives a complex structure to $Z$.

We now consider connected domains $D \subset R$ obtained by removing
$m\ge 1$ discs (homeomorphic images of the closed disc
$\overline \D$) from $R$. The boundary $bD$ of any such domain
is the union of $m$ closed Jordan curves, each bounding a complementary
disc that was removed from $R$. We shall write $R_\theta$ for the
Riemann surface obtained by endowing $R$ with the complex structure
determined by a point $\theta\in T_g$.

He and Schramm proved (see \cite{HS,HS2}) that every domain
$D\subset R_\theta$ as above is conformally equivalent
to a domain $D'$ in another Riemann surface
$R'= R_{\theta'}$ such that the preimage of $D'$
in the universal covering $\D$ of $R'$ is a domain
in $\D$ all of whose complementary components
are geometric (round) discs; we shall call such $D'$
a {\em circle domain}. Moreover, the operation
mapping $D$ to $D'$ is continuous, in the sense that domains
close to $D$ are mapped to circle domains close to $D'$
in Riemann surfaces close to $R'$.
For connected planar domains with at most countably many boundary components,
this solved a famous conjecture of Koebe from 1908 to the effect that
every planar domain is conformally equivalent to a circle domain.
Known as the {\em Kreisnormierungsproblem}, this conjecture was
the subject of considerable effort over many decades.

Using the result of He and Schramm, one can give the
following description of the Teichm\"uller space
$T_{g,m}$ of bordered Riemann surfaces of genus
$g\ge 2$ with $m\ge 1$ boundary components.
Every element of $T_{g,m}$ is represented by a
circle domain $D$ in a closed Riemann surface $R_\theta$
of genus $g$, determined by a point $\theta \in T_g$.
We represent $D$ by a choice of representatives
$(z,r)=(z_1,\ldots,z_m,r_1,\ldots,r_m) \in \D^m\times (0,\infty)^m$
of the centers $z_j\in \D$ and the radii $r_j>0$ of the complementary
components of the preimage of $D$ in $\D$;
such triples $(\theta,z,r)$ then parametrize the points in $T_{g,m}$.
Although this representation of $D$ is clearly not unique
as we may choose different representatives of the removed discs,
it is locally unique in the following sense:
If $\epsilon>0$ is small enough then
the triples $(\theta',z',r')$ that are $\epsilon$-close
to $(\theta,z,r)$ determine pairwise distinct elements
of $T_{g,m}$. (This is seen by observing that the
Fuchsian group $G=\theta(G_0)$ acts properly discontinuously
and without fixed points on $\D$, and for each removed disc
$\Delta\subset \D$ we also remove all its images $g(\Delta)$
for $g\in\theta$.) In this way we define on
$T_{g,m}$ the structure of a real
$(6g-6+3m)$-dimensional manifold.

Let $E_{g,m}$ denote the set of all circle domains $D$
in Riemann surfaces $R_\theta$ $(\theta\in T_g)$
such that $\overline D$ admits
an injective immersion $f\colon \overline D\hra \C^2$
that is holomorphic in $D$. In other words, $E_{g,m}$
is the set of elements of the Teichm\"uller space $T_{g,m}$
that satisfy the hypothesis of Corollary \ref{main}.

\begin{proposition}
\label{open}
The set $E_{g,m}$ is nonempty and open in $T_{g,m}$.
\end{proposition}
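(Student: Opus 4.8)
The plan is to establish the two assertions separately. For \emph{nonemptiness}, it suffices to exhibit a single circle domain $D$ in some closed Riemann surface $R_\theta$ of genus $g$ whose closure admits an injective immersion into $\C^2$ that is holomorphic in $D$. The natural candidate is to take $R_\theta$ to be a smooth projective curve $A\subset\P^2$ of genus $g$ (e.g.\ a smooth plane curve of the appropriate degree), together with a projective line $\P^1\subset\P^2$ meeting $A$ transversely, and to remove from $A$ small pairwise disjoint closed discs $\overline U_1,\ldots,\overline U_m$ whose union contains $A\cap\P^1$. Then $D=A\setminus\bigcup_i\overline U_i\subset \P^2\setminus\P^1=\C^2$ is a smoothly bounded domain in an affine complex curve, so the inclusion $\overline D\hookrightarrow\C^2$ is an injective immersion holomorphic in $D$ (this is precisely Example~\ref{example1}). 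By the result of He and Schramm, this domain is conformally equivalent to a circle domain $D'$ in a Riemann surface $R_{\theta'}$, and the conformal equivalence transports the embedding; hence the corresponding point of $T_{g,m}$ lies in $E_{g,m}$. One small point to check: the removed discs can be chosen so that $D$ has genus exactly $g$ and exactly $m$ boundary components, and (if $m$ is prescribed) one simply removes additional small discs disjoint from $A\cap\P^1$; the inclusion into $\C^2$ is unaffected.

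For \emph{openness}, fix a point of $E_{g,m}$, represented by a circle domain $D\subset R_\theta$ with an injective immersion $f\colon\overline D\hookrightarrow\C^2$, holomorphic in $D$. I would first use Mergelyan's theorem (exactly as in the proof of Corollary~\ref{main}) to extend $f$ to a holomorphic embedding $\wt f$ of a slightly larger open set $U\subset R_\theta$ with $\overline D\Subset U$, onto a locally closed smooth complex curve $A=\wt f(U)\subset\C^2$; after shrinking $U$ we may assume $\wt f$ is an embedding of $U$. Now take a nearby point $(\theta',z',r')\in T_{g,m}$, representing a circle domain $D''\subset R_{\theta'}$. Because $R_{\theta'}$ is close to $R_\theta$ in $T_g$ — and using the holomorphic dependence of the universal family $\pi\colon Z\to T_g$ described in the excerpt (the Ahlfors description of the complex structure on $Z$) — there is, for $(\theta',z',r')$ sufficiently close to $(\theta,z,r)$, a biholomorphism $h$ from a neighborhood of $\overline{D''}$ in $R_{\theta'}$ onto an open subset of $U\subset R_\theta$, with $h$ close to the identity in a suitable sense, and with $h(\overline{D''})\subset U$. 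Then $\wt f\circ h$ is an injective immersion of $\overline{D''}$ into $\C^2$, holomorphic in $D''$, which shows $(\theta',z',r')\in E_{g,m}$.

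The main obstacle — and the step requiring the most care — is producing the biholomorphism $h$ from a neighborhood of $\overline{D''}$ in the perturbed surface $R_{\theta'}$ into the \emph{fixed} ambient surface $R_\theta$ (equivalently, into $U$), depending continuously on the parameters $(\theta',z',r')$. This is where one must exploit the structure of the universal family $Z\to T_g$: nearby fibers $R_{\theta'}$ and $R_\theta$ are identified, over a compact subsurface containing $\overline D$, by a family of quasiconformal (indeed, for the complex-analytic structure on $Z$, genuinely holomorphic on the relevant compact piece after passing to $Z$) identifications that vary holomorphically with $\theta'$ and converge to the identity as $\theta'\to\theta$; the continuity statement in the He--Schramm theorem (domains close to $D$ map to circle domains close to $D'$ in Riemann surfaces close to $R'$) then ensures the circle-domain representatives $D''$ also sit inside the image of this identification. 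Granting these facts, which are part of the cited theory, the composition $\wt f\circ h$ furnishes the required embedding and openness follows. I would organize the write-up so that this compactness-plus-continuity argument is stated cleanly and the injectivity/immersion of $\wt f\circ h$ — a routine consequence of $h$ being close to an inclusion and $\wt f$ being an embedding on $U$ — is dispatched in a sentence.
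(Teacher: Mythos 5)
Your nonemptiness argument is in the right spirit (it matches the paper's, which immerses a compact genus-$g$ surface into $\P^2$ and excises discs), but your specific choice of a \emph{smooth} plane curve of genus $g$ does not work for general $g$: smooth plane curves only realize the genera $(d-1)(d-2)/2$. You need an immersed (e.g.\ nodal) plane curve and must also remove the preimages of the double points along with the hyperplane section; this is a small, fixable slip.

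The openness argument, however, has a genuine gap exactly at the step you flag as the ``main obstacle.'' You need a \emph{holomorphic} injection $h$ from a neighborhood of $\overline{D''}\subset R_{\theta'}$ into the fixed surface $R_\theta$, close to the identity. The cited Teichm\"uller theory does not supply this: the standard identifications between nearby fibers of the universal family $\pi\colon Z\to T_g$ are quasiconformal, not holomorphic, and distinct fibers are not biholomorphic. Your parenthetical claim that these identifications become ``genuinely holomorphic on the relevant compact piece after passing to $Z$'' is not correct — the complex structure on $Z$ makes $\pi$ a holomorphic submersion but gives no holomorphic trivialization over any open set of $T_g$. Whether a bordered surface $\overline{D''}$ sufficiently close to $\overline D$ in $T_{g,m}$ embeds holomorphically into a fixed slightly larger neighborhood $U\supset\overline D$ in $R_\theta$ is a nontrivial assertion that would itself need proof. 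The paper sidesteps this entirely: after extending $f$ to a holomorphic map on an open set $U\subset R_\theta$ containing $\overline D$, it invokes Siu's theorem to get a Stein neighborhood $\Omega$ of $\overline D$ in the total space $Z$ with $\Omega\cap R_\theta\subset U$, extends $f$ to a holomorphic map $F\colon\Omega\to\C^2$ by Cartan's extension theorem, and then simply restricts $F$ to the nearby fibers $\overline{D'}\subset R_{\theta'}$, where it remains an embedding by continuity. You should replace your fiber-identification step with this extension-and-restriction mechanism (or prove the existence of $h$, which would be substantially harder).
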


\begin{proof}
That $E_{g,m}$ is nonempty was proved in
\cite[Theorem 1.1]{CF}, and it also follows from our results:
Any compact Riemann surface $R$ admits an immersion to
$\P^2$, and by cutting out a suitably chosen open disc
$U\subset R$ one obtains a holomorphic embedding of
$\overline D_0 = R\bs U$ into $\C^2$.
Removing $m-1$ additional pairwise disjoint closed
discs from $D_0$ we obtain a point in $E_{g,m}$.

To see that $E_{g,m}$ is open, choose a point $(\theta,z,r) \in E_{g,m}$
and let $D\subset R_\theta$ denote the correponding circle domain.
Let $f\colon \overline D \hra \C^2$ be an embedding as in 
Corollary \ref{main}.
We can approximate $f$ in the $\cC^1(\overline D)$ topology 
by a holomorphic map $f\colon U\to \C^2$ from an open set
$U\subset R_\theta$ containing $\overline D$.

Consider $R_\theta$ as the fiber $\pi^{-1}(\theta)$
in the fibration $\pi\colon Z\to T_g$ defined above.
The set $\overline D \subset R_\theta$ admits
an open Stein neighborhood in $R_\theta$
(just remove a point from each connected component
of $R_\theta\bs \overline D$), and hence it has a basis
of open Stein neighborhoods $\Omega\subset Z$
by Siu's theorem \cite[Theorem 1]{Siu}. Choose $\Omega$
small enough such that $\Omega\cap R_\theta \subset U$.
By Cartan's extension theorem, the map $f\colon U\cap \Omega \to \C^2$
extends to a holomorphic map $F\colon \Omega\to\C^2$.
The restriction of $F$ to any domain $\overline D'\subset R_{\theta'}$
sufficiently near $D$ (in a fiber $R_{\theta'}$ of $Z$
that is sufficiently close to the initial fiber $R_\theta$)
is then a holomorphic embedding of $\overline D'$ into $\C^2$,
and hence such $D'$ belongs to $E_{g,m}$.
This completes the proof of Proposition \ref{open}.
\end{proof}

\begin{problem}
\label{closed}
Is the set $E_{g,m}$ closed in $T_{g,m}$ ?
\end{problem}

An affirmative answer would imply 
that every bordered Riemann surface embeds properly
holomorphically into $\C^2$. 
For the time being this seems entirely out of reach.

\begin{proof}[Sketch of an alternative proof of Corollary \ref{main}]
Fix a circle domain $D\subset \R_\theta$
satisfying the hypothesis of the corollary. 
The argument in the proof of 
Proposition \ref{open} above
give a smooth family of holomorphic embeddings
of (the closures of) all nearby circle domains into $\C^2$.
Proposition 3 in \cite{Wold3} gives another continuously
varying family of holomorphically embedded surfaces in $\C^2$,
close to the original one, whose members all have
an exposed point in each boundary component,
and hence they all embed properly holomorphically
into $\C^2$ by Theorem \ref{Wold}. (This construction
of exposed points is reminiscent of what we did
in the proof of Proposition \ref{exposing} above, 
but less precise as it entails
a small cut of each domain, thereby changing
its conformal structure. At this point one must use 
that the normalization provided by He and Schramm
is a continuous operation.)

An argument as in \cite{GS} and \cite{Wold3}, using the Brouwer fixed point
theorem, now shows that there is a domain in the new
family that is conformally equivalent to the original domain $D$,
thereby concluding the proof.  For domains in tori  
the details of this argument can be found in \cite{Wold3}.
\end{proof}

%
%
%
%
%

\smallskip
\textit{Acknowledgements.}
A part of this work was done when the authors were visiting the 
Department of Mathematics of the University of Berne in the Spring of 2007. 
We wish to thank this institution, and in particular Frank Kutzschebauch, 
for the kind invitation and hospitality, and for providing excellent 
working conditions. The second author wishes to thank the Mittag-Leffler 
Institute in Djursholm for the hospitality and support during a
revision of the paper in February 2008, and J.-P.\ Rosay for having proposed a proof 
of Lemma \ref{weakly-holo} by using the $\dibar$-equation.

\bibliographystyle{amsplain}

\end{document}